\DeclareRobustCommand{\cev}[1]{%
  \mathpalette\do@cev{#1}%
}
\newcommand{\do@cev}[2]{%
  \fix@cev{#1}{+}%
  \reflectbox{$\m@th#1\vec{\reflectbox{$\fix@cev{#1}{-}\m@th#1#2\fix@cev{#1}{+}$}}$}%
  \fix@cev{#1}{-}%
}
\newcommand{\fix@cev}[2]{%
  \ifx#1\displaystyle
    \mkern#23mu
  \else
    \ifx#1\textstyle
      \mkern#23mu
    \else
      \ifx#1\scriptstyle
        \mkern#22mu
      \else
        \mkern#22mu
      \fi
    \fi
  \fi
}
\newtheorem{thm}{Theorem}
\newtheorem*{thm*}{Theorem}
\newtheorem{lem}[thm]{Lemma}
\newtheorem{prop}[thm]{Proposition} 
\newtheorem{defn}[thm]{Definition}
\newtheorem*{defn*}{Definition}
\newtheorem*{prob*}{Problem}
\newtheorem*{remark*}{Remark}
\newtheorem*{claim*}{Claim}
\def\bbf{\mathbb{F}}
\def\fq{\bbk}
\def\fqt{\fq[t]}
\def\fqttt{\fq\!\left(\!\left(t^{-1}\right)\!\right)}
\def\ft{\fq[t]}
\def\ftt{\fq(t)}
\def\fttt{\fq\!\left(\!\left(t^{-1}\right)\!\right)}
\def \q{2}
\def\sep{\,:\,}
\def\defeq{:=}
\def\eqdef{=:}
\def\bbn{\mathbb{N}}
\def\bbz{\mathbb{Z}}
\def\bbq{\mathbb{Q}}
\def\bbr{\mathbb{R}}
\def\bbk{\mathbb{K}}
\def\calf{\mathcal{F}}
\def\cals{\mathcal{S}}
\def\calg{\mathcal{G}}
\def\defeq{:=}
\newcommand{\HankelPolynomial}{D}
\newcommand{\HankelDeterminant}{\Delta}
\newcommand {\ignore}[1]  {}
\begin{document}

\author{Dmitry Gayfulin}
\author{Erez Nesharim}
\affil{{Department of Mathematics, Technion -- Israel Institute of Technology, Haifa.} \textit{gamak.57.msk@gmail.com}, \textit{ereznesh@gmail.com}}



\title{Sums of Laurent series with bounded partial quotients}

\maketitle

\begin{abstract}
Hall \cite{hall} proved that every real number is the sum of an integer and two real numbers whose partial quotients are at most $4$. Cusick \cite{cusick} proved that every real number is the sum of an integer and two real numbers whose partial quotients are at least $2$. In a recent paper, the authors proved that every real number is the sum of two real numbers whose partial quotients diverge. 
In this paper, we prove an analogue of these results for Laurent series. 
\end{abstract}

\section{Introduction}
For every real number $\alpha$, let $a_n(\alpha)$ be the $n$th partial quotient of $\alpha$, whenever its continued fraction expansion is of length 
at least $n$. For every $k\geq2$, let

\begin{equation}\label{eq:FS}
\begin{aligned}
F(k)&\defeq\left\{\alpha\in\bbr\sep a_n(\alpha)\le k \textrm{ for every $n\geq1$}\right\},\\
S(k)&\defeq\left\{\alpha\in\bbr\sep a_n(\alpha)\ge k \textrm{ for every $n\geq1$ for which $a_n(\alpha)$ is defined}\right\},
\end{aligned}
\end{equation}
and let
\begin{equation}\label{eq:G}
    G\defeq\left\{\alpha\in\bbr\sep a_n(\alpha)\to\infty\right\}\cup\bbq\,.
\end{equation}
Hall \cite{hall} proved that 
\begin{equation}\label{thm:hall}
F(4)+F(4) = \bbr\,.
\end{equation}
This result incited many investigations, which, among other results, show that (see \cite{astel} for a friendly introduction to these results)
\begin{equation}\label{eq:hall}
F(k)+F(m) = \bbr\quad\iff\quad k+m\geq 7\,.
\end{equation}
Complementarily, Cusick \cite{cusick} proved that 
\begin{equation}\label{eq:cusick}
S(2)+S(2) = \bbr\,,
\end{equation}
and Shulga \cite{shulga} extended \eqref{eq:cusick} by showing that for every $3\leq k \leq m\leq (k-1)^2$ the sum of $S(k)$ and $S(m)$ contains an interval
\begin{equation}\label{eq:shulga}
S(k) + S(m)\supseteq \left[0,\frac{1}{k-1}\right].
\end{equation}
This result is achieved through an explicit algorithm that is reviewed in Section \ref{sec:shulga}. Continuing the analysis of Shulga's algorithm, the authors \cite{GN} proved that
\begin{equation}\label{eq:GN}
    G+G=\bbr\,.
\end{equation}
In this paper, we prove analogues of these results for Laurent series.

Let $\fq$ be an arbitrary field, $\ft$ its ring of polynomials, and $\ftt$ its field of rational functions. The field $\fttt$ is the field of all Laurent series in the variable $t$ and coefficients from $\fq$
\begin{equation}\label{eq:theta}
\alpha = \sum_{n=-\infty}^\infty \alpha_n t^{-n}\,,
\end{equation}
where $\alpha_n\in\fq$ for every $n\in\bbz$ and only finitely many of the coefficients of positive powers of $t$ are nonzero. 

For every Laurent series $\alpha\in\fttt$, let $a_n(\alpha)$ be the $n$th partial quotient of $\alpha$, whenever its continued fraction expansion is of length 
at least $n$, and let 
$$d_n(\alpha)\defeq\deg\left(a_n\right)$$
(see Section \ref{prelim_sec} for more background on continued fractions of Laurent series). For every $k\geq1$, we consider the following analogues of the sets in \eqref{eq:FS} and \eqref{eq:G}:
\begin{align*}
\calf(k)&\defeq\left\{\alpha\in\fttt\sep d_n(\alpha)\le k \textrm{ for every $n\geq1$}\right\},\\
\cals(k)&\defeq\left\{\alpha\in\fttt\sep d_n(\alpha)\ge k \textrm{ for every $n\geq1$ for which $a_n(\alpha)$ is defined}\right\},
\end{align*}
and
\begin{equation}
    \calg\defeq\left\{\alpha\in\fttt\sep d_n(\alpha)\to\infty\right\}\cup\ftt\,.
\end{equation}
Our main theorems are Laurent series analogues of \eqref{eq:hall}, \eqref{eq:shulga} and \eqref{eq:GN}:
\begin{thm}
\label{hall_thm}
If $\fq\neq\bbf_2$ then the sumset of $\calf(1)$ with itself covers $\fttt$
\begin{equation}
\label{laurent_representation}
\calf(1)+\calf(1)=\fttt.
\end{equation}
If $\fq=\bbf_2$ then \eqref{laurent_representation} does not hold, but
\begin{equation}
\label{laurent_representation21}
\calf(2)+\calf(1)=\fttt.
\end{equation}
\end{thm}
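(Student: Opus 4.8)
The plan is to build the two summands $\beta,\gamma$ with $\beta+\gamma=\alpha$ by a greedy digit-by-digit construction on the Laurent coefficients, while maintaining the invariant that the partial quotients produced so far have the required degree bound. Recall that a Laurent series $\xi=\sum \xi_n t^{-n}$ lies in $\calf(1)$ exactly when every partial quotient $a_n(\xi)$ is a degree-one polynomial (a linear term $c_n t + c_n'$); by the classical theory of continued fractions over $\fttt$, this is controlled by the sequence of best-approximation denominators, equivalently by non-vanishing of the relevant Hankel-type determinants $\HankelDeterminant$ formed from the coefficients. So I would first translate ``$\xi\in\calf(1)$'' into an explicit combinatorial condition on the coefficient stream of $\xi$ — a recurrence or determinant condition that can be checked and enforced one coefficient at a time. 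The free parameter at each step is the choice of the next coefficient of $\beta$ (which forces the next coefficient of $\gamma=\alpha-\beta$); the claim is that when $|\fq|\ge 3$ there are always at least two admissible choices, enough to keep both $\beta$ and $\gamma$ inside $\calf(1)$, whereas over $\bbf_2$ there is not enough room and one must relax to $\calf(2)$ for one of the two summands.

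The key steps, in order, would be: (i) set up the continued-fraction/Hankel-determinant dictionary for $\fttt$ from Section \ref{prelim_sec}, isolating the precise obstruction to a partial quotient having degree $>1$; (ii) formulate the inductive hypothesis as: after choosing the first $N$ coefficients of $\beta$ and $\gamma$, the truncations are ``consistent with'' both lying in $\calf(1)$, in the sense that the Hankel determinants that are already fully determined are nonzero and the ones still in play impose at most one forbidden value on the next coefficient; (iii) do the inductive step: the next coefficient $\beta_{N+1}\in\fq$ must avoid at most one value to keep $\beta$ admissible and at most one value (namely $\beta_{N+1}=\alpha_{N+1}-\text{(bad value for }\gamma)$) to keep $\gamma$ admissible, so any field with $|\fq|\ge 3$ leaves a legal choice; (iv) handle $\fq=\bbf_2$: show by an explicit $\alpha$ (e.g.\ a suitable quadratic irrational or an explicit series) that $\calf(1)+\calf(1)\neq\fttt$, then redo the induction allowing $\beta\in\calf(2)$, where the extra degree of freedom (a degree-two partial quotient introduces an extra coefficient to play with) restores enough room to absorb the single constraint coming from $\gamma\in\calf(1)$; (v) check that the construction produces genuine (infinite, or appropriately terminating) continued fractions, i.e.\ that the degrees don't accidentally drop to $0$, which over a field means ruling out a forced repeated coefficient pattern — this is where one uses $\fq\ne\bbf_2$ again, or the $\calf(2)$ slack.

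I expect the main obstacle to be step (ii)–(iii): making the ``at most one forbidden value per summand'' claim precise and actually true. A single new coefficient of $\beta$ typically influences several not-yet-finalized Hankel determinants simultaneously, so naively one might fear several simultaneous constraints rather than one. The resolution should be that because $\calf(1)$ forces consecutive partial quotients to be linear, the relevant determinants satisfy a short three-term (or Somos-like) recurrence, and only the ``newest'' determinant genuinely depends on $\beta_{N+1}$ in a nondegenerate (affine, non-constant) way — the others are already pinned down by the induction hypothesis. Verifying this nondegeneracy (that the coefficient of $\beta_{N+1}$ in the critical determinant is a unit, hence the forbidden set is a single point) is the technical heart. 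A secondary subtlety is the $\bbf_2$ counterexample: one must exhibit a concrete $\alpha\in\bbf_2((t^{-1}))$ that provably cannot be split, presumably by showing every candidate $\beta$ forces, at some finite stage, a degree-$\ge 2$ partial quotient in $\beta$ or in $\alpha-\beta$ — a finite case check, but one that must be set up carefully so that the obstruction genuinely propagates rather than being avoidable by a later correction.
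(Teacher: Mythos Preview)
Your plan is correct and is precisely the paper's approach: the cofactor expansion $\HankelPolynomial_{h+1}=x_{2h+1}\HankelPolynomial_h+P_h(x_1,\dots,x_{2h})$ makes your ``nondegeneracy'' worry evaporate (only the newest determinant sees the new odd-indexed coefficient, affinely with unit slope $\HankelPolynomial_h\ne0$), so at each odd step the forbidden set is exactly two points and $|\fq|\ge3$ suffices, while even-indexed coefficients are free. For $\fq=\bbf_2$ the counterexample is far simpler than you anticipate --- $\alpha=t^{-1}$ already fails, since $\beta_1+\gamma_1=1$ forces one of them to vanish --- and the $\calf(2)+\calf(1)$ induction does not rely on an ``extra coefficient'' heuristic but on a second, $\bbf_2$-specific expansion $P_h=x_{2h}^2\HankelPolynomial_{h-1}+Q_h(x_1,\dots,x_{2h-1})$ (the terms linear in $x_{2h}$ pair off under the diagonal reflection and cancel in characteristic $2$), which together with $x^2=x$ lets one steer $\HankelPolynomial_{n+1}(\beta)$ via the \emph{even} index $\beta_{2n}$ whenever $\HankelPolynomial_n(\beta)=0$, freeing $\gamma_{2n+1}$ to keep $\gamma\in\calf(1)$.
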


\begin{thm}\label{thm:S_analogue}
For every field $\fq$ and every $1\le k\leq m$ the sum of the sets $\cals(k)$ and $\cals(m)$ satisfies
\begin{equation*}
\cals(k)+\cals(m) \supseteq \left\{\alpha\in\fttt\sep\alpha_n=0 \textrm{ for every $1\leq n < \max\left\{k,\frac{m-1}{2}\right\}$}\right\}.
\end{equation*}
\end{thm}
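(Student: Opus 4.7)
The strategy is to adapt Shulga's algorithm from Section~\ref{sec:shulga} to the Laurent series setting, exploiting the fact that $\fttt$ carries a non-Archimedean absolute value and that continued fraction theory works cleanly there: the convergents $p_n/q_n$ of $\alpha=[a_0;a_1,a_2,\dots]$ satisfy $|\alpha-p_n/q_n|=|q_nq_{n+1}|^{-1}$ and $\deg q_n=d_1(\alpha)+\cdots+d_n(\alpha)$, and there are no carrying issues in the sum $\beta+\gamma$. Without loss of generality we assume $\alpha$ has no polynomial part, since adding an element of $\ft$ to $\alpha$ affects only $a_0$, which is unconstrained in both $\cals(k)$ and $\cals(m)$.

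We construct finite truncations $[0;b_1,\dots,b_n]=p_n/q_n$ of $\beta\in\cals(k)$ and $[0;c_1,\dots,c_{n'}]=P_{n'}/Q_{n'}$ of $\gamma\in\cals(m)$ by a simultaneous induction, extending at each step whichever of the two has the smaller current denominator degree. The next partial quotient (say $b_{n+1}$) is read off from the residual $\rho\defeq\alpha-p_n/q_n-P_{n'}/Q_{n'}$: the non-Archimedean inequality pins down the degree of the leading term of $\rho$, and one can then explicitly write down a polynomial $b_{n+1}$ of any prescribed degree $\ge k$ such that the updated convergent of $\beta$ cancels the leading terms of $\rho$. The invariant to propagate is a lower bound on $-\deg\rho$ in terms of $\deg q_n+\deg Q_{n'}$ plus the degree of the \emph{next} partial quotient on each side, guaranteeing that this explicit step is always available.

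The hypothesis $\alpha_n=0$ for $1\le n<\max\{k,(m-1)/2\}$ is used only to launch the induction: it ensures that $b_1$ and $c_1$ can be chosen of degree $\ge k$ and $\ge m$ respectively while consistently accounting for the leading coefficient of $\alpha$. The main obstacle I anticipate is formulating the invariant precisely enough that the \emph{switches} between extending $\beta$ and extending $\gamma$ go through: at a switch, the newly added partial quotient on one side exposes new leading terms in the residual that constrain the other side's next partial quotient, and verifying that the degree $\ge k$ or $\ge m$ is still attainable is where the bound $\max\{k,(m-1)/2\}$ should emerge — the ultrametric error estimate $|\beta-p_n/q_n|\le|t|^{-2\deg q_n-k}$ from one step on the $\beta$ side gives the $\gamma$ side roughly half of its required headroom back, accounting for the factor $1/2$ in the threshold $(m-1)/2$. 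Once the invariant is correctly set up, the remaining verifications reduce to routine non-Archimedean degree arithmetic.
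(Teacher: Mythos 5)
Your plan follows essentially the same route as the paper: adapt Shulga's algorithm (the paper's Definition~\ref{def:shulga_an}) to Laurent series and verify, via ultrametric estimates, that $\deg(c_1)\geq 2\deg(b_1)+1$ and thereafter $\deg(b_n)\geq\deg(c_{n-1})+2$ and $\deg(c_n)\geq\deg(b_n)+2$ (the paper's Theorem~\ref{laurent_prop}\eqref{item:2}), so that the hypothesis $\alpha_n=0$ for $1\le n<\max\{k,(m-1)/2\}$ forces $\deg(b_1)\geq k$ and then $\deg(c_1)\geq m$, with all later degrees even larger. One point worth tightening before you write this up: in Shulga's algorithm the degree of $b_{n+1}$ is not prescribable but is \emph{determined}, namely $b_{n+1}=a_{n+1}(\alpha-[0;c_1,\dots,c_n])$, and the whole content of the inductive invariant is that this forced degree is automatically at least $\deg(c_n)+2$; the factor $1/2$ in the threshold comes precisely from the initial relation $\deg(c_1)\geq 2\deg(b_1)+1$, as you correctly guessed.
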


\begin{thm}\label{thm:G_analogue}
For every field $\fq$ the sumset of $\calg$ with itself covers $\fttt$
\begin{equation*}
\calg+\calg = \fttt.
\end{equation*}
\end{thm}

In fact, Shulga's algorithm behaves more nicely in the Laurent series setup. Therefore, we can prove a stronger version of Theorem \ref{thm:G_analogue}. Let the set of all Laurent series whose degrees of partial quotient are strictly increasing be
\begin{equation*}
\calg'\defeq\left\{\alpha\in\bbr\sep d_{n}(\alpha)<d_{n+1}(\alpha) \textrm{ for every $n\geq1$ such that $a_{n+1}(\alpha)$ is defined}\right\}.
\end{equation*}
We will show that Theorem \ref{thm:G_analogue} holds with $\calg$ replaced with its subset $\calg'$:

\begin{thm}\label{thm:Gprime_analogue}
For every field $\fq$ the sumset of $\calg'$ with itself covers $\fttt$
\begin{equation*}
\calg'+\calg' = \fttt.
\end{equation*}
\end{thm}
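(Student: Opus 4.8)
The plan is to adapt Shulga's algorithm, whose analysis over $\bbr$ underlies \eqref{eq:GN}, to the Laurent series setting, and to show that in this setting the algorithm produces summands whose partial quotient degrees are \emph{strictly increasing}, not merely divergent. Given a target $\alpha\in\fttt$, I would construct $\beta,\gamma\in\fttt$ with $\beta+\gamma=\alpha$ by running a greedy procedure that at each stage fixes one more partial quotient of $\beta$ and of $\gamma$. The key quantitative input from Section \ref{sec:shulga} (Shulga's algorithm) will be the bookkeeping lemma that controls, after $n$ steps, the ``error term'' $\alpha-(\beta^{(n)}+\gamma^{(n)})$ in terms of the degrees $d_1(\beta),\dots,d_n(\beta)$ and $d_1(\gamma),\dots,d_n(\gamma)$ chosen so far. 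Over the reals this error is a rational function of the convergent denominators and only tends to $0$; over $\fttt$ the non-archimedean absolute value $|\cdot|=t^{\deg}$ makes the analogous estimate exact and ultrametric, so one has complete freedom to choose each new degree strictly larger than all previous ones while still forcing the partial sums to converge to $\alpha$ in $\fttt$.

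\medskip

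More concretely, the steps I would carry out are: (1) Recall the precise form of Shulga's recursion and re-derive it for Laurent continued fractions, using the standard facts that for $\alpha=[a_0;a_1,a_2,\dots]$ with convergents $p_n/q_n$ one has $|q_n|=\prod_{i=1}^n|a_i|$ and $|\alpha-p_n/q_n|=|q_n|^{-1}|q_{n+1}|^{-1}$, and that $\alpha\in\cals(k)$ iff every $d_i\ge k$. (2) Set up the induction: assuming $\beta^{(n)}=[b_0;b_1,\dots,b_n]$ and $\gamma^{(n)}=[c_0;c_1,\dots,c_n]$ have been built with $d_i(\beta^{(n)})$ and $d_i(\gamma^{(n)})$ strictly increasing, show one can choose $b_{n+1},c_{n+1}$ of degree exceeding $\max\{d_n(\beta^{(n)}),d_n(\gamma^{(n)})\}$ (and large enough to swallow the current error) so that $\beta^{(n+1)}+\gamma^{(n+1)}$ agrees with $\alpha$ to strictly more terms. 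Here the freedom to pick the \emph{leading coefficient} of the new partial quotient — available because $\fq$ is an arbitrary field, and only a single degree-of-freedom is needed per step — is what lets us kill the next coefficient of the error exactly. (3) Pass to the limit: since the degree of the error strictly decreases, $\beta^{(n)}\to\beta$ and $\gamma^{(n)}\to\gamma$ in $\fttt$, their continued fraction expansions stabilize term by term, and $\beta,\gamma\in\calg'$ with $\beta+\gamma=\alpha$. (4) Handle rational-function targets and any degenerate cases (finite continued fractions) separately, placing them in $\calg'$ via the $\ftt$ clause in the definition of $\calg$, noting $\calg'\subseteq\calg$ is immediate.

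\medskip

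I expect the main obstacle to be step (2): verifying that the greedy choice at each step is actually \emph{feasible} — i.e., that the system of constraints ``new partial quotient has degree $>$ all previous; new partial sum matches $\alpha$ one coefficient further'' always has a solution over an arbitrary field $\fq$, including $\bbf_2$ where there is no room to adjust nonzero scalars. Resolving this will hinge on showing that one new partial quotient of sufficiently large degree contributes enough free coefficients (its degree many, plus leading-coefficient normalization) to correct the finitely many error coefficients that its introduction affects, together with a careful matching of which coefficient of $\beta$ versus $\gamma$ is adjusted at each parity of the step — essentially the Laurent-series incarnation of the interval-covering condition $m\le(k-1)^2$ in \eqref{eq:shulga}, which here becomes automatic once degrees are allowed to grow. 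A secondary technical point is ensuring the two sequences of degrees can be kept strictly increasing \emph{simultaneously} without the corrections to $\beta$ and $\gamma$ interfering; I would handle this by alternating which summand is updated, so that between consecutive updates of the same summand the other summand's degree has jumped, leaving ample slack.
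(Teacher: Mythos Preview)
Your high-level plan---adapt Shulga's algorithm to $\fttt$ and show the resulting $\beta,\gamma$ have strictly increasing partial-quotient degrees---is exactly the paper's route. But your description of the algorithm and the obstacles you anticipate both reflect a misconception that would make step~(2) much harder than it needs to be.

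In the paper's Laurent-series version of Shulga's algorithm (Definition~\ref{def:shulga_an}) there are \emph{no free choices}: one simply sets
\[
b_{n+1}\defeq a_{n+1}\!\bigl(\alpha-[0;c_1,\dots,c_n]\bigr),\qquad
c_{n+1}\defeq a_{n+1}\!\bigl(\alpha-[0;b_1,\dots,b_{n+1}]\bigr).
\]
There is no ``picking the leading coefficient'' and hence nothing special happens over $\bbf_2$. The entire content is then an inductive degree estimate (Theorem~\ref{laurent_prop}\eqref{item:2}): writing $p_j/q_j$ and $s_j/t_j$ for the convergents of $\beta$ and $\gamma$, one proves the interlaced chain
\[
\deg t_{j-1}-\deg q_{j-1}<\deg q_j-\deg t_{j-1}<\deg t_j-\deg q_j,
\]
each inequality following from the previous one by combining the exact identity $|\alpha-p_j/q_j-s_{j-1}/t_{j-1}|=|q_j|^{-1}|q_{j+1}|^{-1}$ (since $p_j/q_j$ is a convergent of $\alpha-s_{j-1}/t_{j-1}$) with the Legendre-type bound $|\alpha-p_j/q_j-s_{j-1}/t_{j-1}|<|t_{j-1}|^{-2}$. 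This immediately gives $\deg c_1\ge 2\deg b_1+1$, $\deg b_n\ge\deg c_{n-1}+2$, and $\deg c_n\ge\deg b_n+2$, so the degrees of both sequences are strictly increasing \emph{automatically}. The ``feasibility over $\bbf_2$'' and ``kill one coefficient per step'' issues you flag simply do not arise; you seem to be blending in the coefficient-matching mechanism from the Hall-type argument of Theorem~\ref{hall_thm}, which is a different construction.

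One small correction to your step~(4): $\calg'$ has no $\ftt$ clause, so you cannot place rational outputs there by fiat. The paper handles this via Theorem~\ref{laurent_prop}(3): if $\alpha\in\ftt$ the algorithm terminates in finitely many steps, and the finitely many $b_i,c_i$ it produces still satisfy the strict degree inequalities above, so the resulting rational $\beta,\gamma$ lie in $\calg'$ by definition.
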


It is interesting to note that the following real analogue of Theorem \ref{thm:Gprime_analogue} is currently not known.
\begin{prob*}
Let 
\begin{equation*}
    G'\defeq\left\{\alpha\in\bbr\sep a_n(\alpha)<a_{n+1}(\alpha)\textrm{ for every $n\geq1$ such that $a_{n+1}(\alpha)$ is defined}\,\right\}.
\end{equation*}
Is it true that the sumset of $G'$ with itself covers $\bbr$
\begin{equation*}
G'+G' = \bbr\,?
\end{equation*}
\end{prob*}

\section{Basic properties of continued fractions of Laurent series}
\label{prelim_sec}
In this section, we recall some basic facts about continued fractions of Laurent series that we use in the article. For a more detailed background, we refer to \cite[Section 9]{thakur}. 

For $\alpha$ as in \eqref{eq:theta}, let its \emph{degree} and \emph{absolute value} 
be 
$$\deg\alpha\defeq\sup\left\{ -n\sep \alpha_n\neq0\right\}$$
and 
$$\left|\alpha\right|=\q^{\deg\alpha}\,,$$ 
respectively (where $\deg0=-\infty$ and $\left|0\right|=0$). 
It is well known that with this absolute value $\ftt$ is embedded in $\fttt$ as the subfield of all Laurent series with an eventually periodic sequence of coefficients.

Every $\alpha\in\fttt$ has a unique continued fraction expansion
$$
\alpha=a_0+\cfrac{1}{a_1+\cfrac{1}{a_2+\cdots\vphantom{\cfrac{1}{1}}}}\eqdef[a_0;a_1,a_2,\dots]\,,
$$
where $a_n\in\fqt$ for every $n\geq0$, and $d_n\defeq\deg(a_n)$ satisfies $d_n\geq1$ for every $n \geq 1$. This expansion is unique for every $\alpha$, and it is finite if and only if $\alpha$ is a rational function. The convergents of a continued fraction are the rational functions
$$
\frac{p_n}{q_n}:=[a_0;a_1,a_2,\ldots,a_n]
$$
whose numerators and denominators are defined by the recurrence relations
\begin{equation*}
\begin{aligned}
    p_0&=a_0\,,\quad &p_1&=a_1a_0+1\,,\quad
    &p_{n}&=a_{n}p_{n-1}+p_{n-2}\,,\quad\textrm{and}\\
    q_0&=1\,,\quad&q_1&=a_1\,,\quad
    &q_{n}&=a_{n}q_{n-1}+q_{n-2}\,.
\end{aligned}
\end{equation*}
For any $n\ge0$ we consider $a_n(\cdot)$, $d_n(\cdot)$, $p_n(\cdot)$ and $q_n(\cdot)$ as functions that return the $n$th partial quotient of the continued fraction expansion and its degree, the numerator and denominator of its $n$th convergent, respectively. If $\alpha$ is rational, then $n$ should not exceed the length of the continued fraction expansion of $\alpha$; otherwise, these functions are undefined.

The following properties are well known:
\begin{enumerate}
\item The degrees of partial quotients and denominators are related by 
\begin{equation}\label{eq:common1}
\deg(q_n)=\deg(q_{n-1}) + d_n\,
\end{equation}
whenever $a_n$ is defined.
\item For every $n\geq0$, if $a_{n+1}$ is defined then 
\begin{equation}\label{eq:common3}
\left|\alpha-\frac{p_n}{q_n} \right|=\frac{1}{|q_n|\,|q_{n+1}|}\,.
\end{equation}
\item Let $p/q\in\ftt$. Then
\begin{equation}\label{eq:common4}
    \left|\alpha-\frac{p}{q} \right|<\frac{1}{|q|^2} \quad\Longrightarrow\quad \frac pq=\frac {p_n}{q_n}\,\textrm{ for some $n\geq0$}\,. 
\end{equation}
\ignore{\item
\begin{equation}\label{eq:common5}
    \frac{q_n}{q_{n-1}} = [a_n;a_{n-1},\ldots,a_{1}]\,.
\end{equation}}
\item For every $h\geq1$, let $\HankelPolynomial_h$ be the polynomial with $2h-1$ variables $x_1,\ldots,x_{2h-1}$ which is given by the determinant
\begin{equation}\label{eq:tm_hankel}
	\HankelPolynomial_h(x_1,\ldots,x_{2h-1}) \defeq 
	\begin{vmatrix}
		x_{1}&x_{2}&\cdots &x_{h}\\
		x_{2}&\iddots&\iddots&x_{h+1}\\
		\vdots&\iddots&\iddots&\vdots\\
		x_{h}&x_{h+1}&\cdots&x_{2h-1}
	\end{vmatrix},
\end{equation}
and let
\begin{equation*}
\HankelDeterminant_h=\HankelDeterminant_h(\alpha)\defeq\HankelPolynomial_h(\alpha_1,\ldots,\alpha_{2h-1})\,.
\end{equation*}
Then
\begin{equation}\label{eq:det_lem}
    \begin{aligned}
        \HankelDeterminant_h\ne 0 
        \quad & \iff \quad \min_{p,q\in\fqt,\;0\leq\deg q<h} | q\theta - p |=2^{-h}  \\
        & \iff \quad 
        h=\deg(q_n) \,\textrm{ for some $n\geq1$}\,\\
    \end{aligned}
\end{equation}
where the first equivalence follows by rewriting $| q\theta - p |=2^{-h}$ as a system of linear equations, and the second equivalence follows from \eqref{eq:common3} and \eqref{eq:common4}.
It will also be convenient to let $D_0$ be the constant polynomial $1$.
\end{enumerate}

Note that \eqref{eq:common4} is the Laurent series analogue of Legendre's theorem, and its converse holds by \eqref{eq:common3}. Also note that \eqref{eq:common1} and \eqref{eq:det_lem} imply that
\begin{equation}\label{eq:913}
    \alpha\in\calf(k) \quad\iff\quad \begin{aligned}
        &\textrm{for every $h\geq k$, if $\HankelDeterminant_{h-i}=0$ for every $1\leq i< k$ then $\HankelDeterminant_{h}\neq0$}\,.
    \end{aligned}
\end{equation}

\ignore{Equations \eqref{eq:common3} and \eqref{eq:common4} have a nice interpretation in terms of Hankel determinants that we now recall. For every $h\geq1$, let $\HankelPolynomial_h$ be a polynomial with $2h-1$ variables $x_1,\ldots,x_{2h-1}$ which is given by
\begin{equation}\label{eq:tm_hankel}
	\HankelPolynomial_h(x_1,\ldots,x_{2h-1}) \defeq 
	\begin{vmatrix}
		x_{1}&x_{2}&\cdots &x_{h}\\
		x_{2}&\iddots&\iddots&x_{h+1}\\
		\vdots&\iddots&\iddots&\vdots\\
		x_{h}&x_{h+1}&\cdots&x_{2h-1}
	\end{vmatrix}.
\end{equation}
It will also be convenient to let $D_0$ be the constant polynomial $1$.
\begin{thm*}
\label{det_lem}
Let $\alpha\in\fttt$ and $h\geq1$. Then 
$$\HankelPolynomial_h(\alpha_1,\ldots,\alpha_{2h-1})\ne 0 \quad \iff \quad h=\deg(q_n) \,\textrm{ for some $n\geq1$}\,.$$
\end{thm*}}

\section{An analogue of Hall's theorem}

The proof of Theorem \ref{hall_thm} is based on analysing the determinant \eqref{eq:tm_hankel}:

\begin{lem}\label{lem:inspection}
    For every $h\geq1$ there exists a degree $h+1$ homogeneous polynomial of $2h$ variables $P_h$ such that
    \begin{equation}\label{delta_np1_1eq}
        \HankelPolynomial_{h+1}(x_1,\ldots,x_{2h+1})=x_{2h+1}\HankelPolynomial_h(x_1,\ldots,x_{2h-1})+P_h(x_1,\ldots,x_{2h}).
    \end{equation}
    Moreover, if $\fq=\bbf_2$ then there exists a degree $h+1$ homogeneous polynomial $Q_h$ of $2h-1$ variables such that
    \begin{equation}\label{delta_np1_2eq}
        P_{h}(x_1,\ldots,x_{2h})=x_{2h}^2\HankelPolynomial_{h-1}(x_1,\ldots,x_{2h-3})+Q_h(x_1,\ldots,x_{2h-1}).
    \end{equation}
\end{lem}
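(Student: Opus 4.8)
The plan is to expand the Hankel determinant $\HankelPolynomial_{h+1}$ along its last row (or last column, by symmetry) and track exactly how the variable $x_{2h+1}$ enters. The $(h+1)\times(h+1)$ Hankel matrix has $x_{2h+1}$ only in its bottom-right corner, so cofactor expansion along the last row gives $\HankelPolynomial_{h+1} = x_{2h+1}\cdot M_{h+1,h+1} + (\text{terms not involving } x_{2h+1})$, where $M_{h+1,h+1}$ is the minor obtained by deleting the last row and column. That minor is precisely the $h\times h$ Hankel matrix in $x_1,\ldots,x_{2h-1}$, i.e.\ $\HankelPolynomial_h(x_1,\ldots,x_{2h-1})$. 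The remaining terms involve only $x_1,\ldots,x_{2h}$ (the other entries of the last row are $x_{h+1},\ldots,x_{2h}$, and the minors they multiply use only rows and columns $1,\ldots,h$, which never reach index $2h+1$); collecting them defines $P_h(x_1,\ldots,x_{2h})$. Homogeneity of degree $h+1$ is immediate since $\HankelPolynomial_{h+1}$ is a sum of products of $h+1$ entries, each of degree $1$; and $P_h$ is a difference of two homogeneous degree $h+1$ polynomials. This proves \eqref{delta_np1_1eq}.

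For \eqref{delta_np1_2eq}, I would look at how $x_{2h}$ sits inside $P_h$. In the full matrix for $\HankelPolynomial_{h+1}$, the variable $x_{2h}$ occupies the two anti-diagonal-adjacent positions $(h,h+1)$ and $(h+1,h)$. Since $\HankelPolynomial_{h+1}=x_{2h+1}\HankelPolynomial_h+P_h$ and $\HankelPolynomial_h$ does not involve $x_{2h}$, the $x_{2h}$-dependence of $P_h$ is the same as that of $\HankelPolynomial_{h+1}$. A symmetric matrix with a repeated entry $y$ in positions $(i,j)$ and $(j,i)$ has determinant quadratic in $y$: writing $A(y) = A_0 + y(E_{ij}+E_{ji})$ we get $\det A(y) = \det A_0 + y\cdot(\text{linear cofactor terms}) + y^2\cdot(\text{the complementary minor})$. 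Here the $y^2$ coefficient is the minor of $\HankelPolynomial_{h+1}$ obtained by deleting rows $\{h,h+1\}$ and columns $\{h,h+1\}$ — which is the top-left $(h-1)\times(h-1)$ Hankel block $\HankelPolynomial_{h-1}(x_1,\ldots,x_{2h-3})$. The characteristic-$2$ hypothesis enters exactly here: in $\bbf_2$ the linear-in-$x_{2h}$ cofactor terms coming from the off-diagonal placement of $x_{2h}$ cancel in pairs (the two symmetric contributions $(h,h+1)$ and $(h+1,h)$ are equal, so their sum is $2\cdot(\cdots)=0$), leaving $P_h = x_{2h}^2\HankelPolynomial_{h-1}(x_1,\ldots,x_{2h-3}) + Q_h$ with $Q_h$ independent of $x_{2h}$, hence a polynomial in $x_1,\ldots,x_{2h-1}$; homogeneity of degree $h+1$ follows as before.

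The main technical point — and the step I would be most careful about — is the bookkeeping in the second part: verifying that the only term of $P_h$ of degree $2$ in $x_{2h}$ is $x_{2h}^2\HankelPolynomial_{h-1}$, and that in characteristic $2$ all terms of degree $1$ in $x_{2h}$ vanish. This is not simply "the off-diagonal entries appear twice," because a single term in the Leibniz expansion of $\det$ picks out one entry from each row and column, so it can contain $x_{2h}$ from position $(h,h+1)$ \emph{or} from $(h+1,h)$ but the two choices are matched by transposing the relevant part of the permutation, and the products are equal. I would make this precise either via the Leibniz formula with an involution on permutations pairing $\sigma$ with $\sigma\circ(h\ h{+}1)$, or, more cleanly, by the bilinear expansion of the determinant as a multilinear function of its rows: write row $h$ as $r_h = r_h' + x_{2h}e_{h+1}$ and row $h+1$ as $r_{h+1} = r_{h+1}' + x_{2h}e_h$ (where $r_h', r_{h+1}'$ are the rows with the $x_{2h}$-entry zeroed out), expand $\det$ in these two rows to get four terms, and observe that the two "mixed" terms are negatives of each other up to a row swap, hence cancel precisely in characteristic $2$ where $-1=1$ makes them equal and their sum zero... — I would double-check the sign here, since that is the crux of why the statement is special to $\bbf_2$ — while the pure-$x_{2h}^2$ term is $x_{2h}^2$ times the determinant of the matrix with rows $h,h+1$ replaced by $e_{h+1},e_h$, which after the obvious expansion equals $\HankelPolynomial_{h-1}(x_1,\ldots,x_{2h-3})$ (up to a sign from swapping the two unit-vector rows, to be tracked). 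Everything else is routine.
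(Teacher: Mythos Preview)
Your approach matches the paper's: expand along the last row for \eqref{delta_np1_1eq}, and for \eqref{delta_np1_2eq} exploit the symmetry of the Hankel matrix to kill the linear-in-$x_{2h}$ terms over $\bbf_2$. The paper phrases the cancellation via the Leibniz formula, pairing each permutation that hits exactly one $x_{2h}$ with its \emph{inverse} (reflection across the main diagonal); since the matrix is symmetric and $\operatorname{sgn}(\sigma)=\operatorname{sgn}(\sigma^{-1})$, the paired monomials coincide and sum to zero in characteristic $2$.

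One caution: your first suggested involution $\sigma\mapsto\sigma\circ(h\ h{+}1)$ does \emph{not} do this job. If $\sigma(h)=h+1$ and $\sigma(h+1)=j\neq h$, then $\sigma\circ(h\ h{+}1)$ sends $h+1\mapsto h+1$, so the paired term picks up $x_{2h+1}$ rather than $x_{2h}$; you have left the set of ``exactly one $x_{2h}$'' monomials altogether. Your second route---the multilinear expansion in rows $h$ and $h{+}1$---is the one that actually works: the two mixed terms are determinants of submatrices that are transposes of one another (by symmetry of the Hankel matrix), hence equal, and $1+1=0$ in $\bbf_2$. The $x_{2h}^2$ term is $\det(\ldots,e_{h+1},e_h)=-\HankelPolynomial_{h-1}=\HankelPolynomial_{h-1}$ in $\bbf_2$, as required; the paper leaves this last identification implicit, so your version is in fact slightly more explicit here.
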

\begin{proof}
    The proof follows by inspection of the determinant
    \begin{equation}\label{eq:hankel1}
    	\HankelPolynomial_{h+1}(x_1,\ldots,x_{2h+1}) = 
    	\begin{vmatrix}
    		x_{1}&x_{2}&\cdots &x_{h}&x_{h+1}\\
    		x_{2}&\iddots&\iddots&x_{h+1}&x_{h+2}\\
    		\vdots&\iddots&\iddots&\vdots&\vdots\\
    		x_{h}&x_{h+1}&\cdots&x_{2h-1}&x_{2h}\\
            x_{h+1}&x_{h+2}&\cdots&x_{2h}&x_{2h+1}\\
    	\end{vmatrix}.
    \end{equation}
Indeed, note that $x_{2h+1}$ appears in \eqref{eq:hankel1} only once, in the lower-right corner. So, expanding $\HankelPolynomial_{h+1}(x_1,\ldots,x_{2h+1})$ by, for example, the last row, gives \eqref{delta_np1_1eq}. To see \eqref{delta_np1_2eq}, it is useful to think of $\HankelPolynomial_{h+1}$ as a sum of $(h+1)!$ monomials of degree $h+1$, where the summands are in bijection with permutations of $h+1$ elements. Note that $x_{2h}$ appears in \eqref{eq:hankel1} exactly twice, in the one-before-last entry in the last row and column. Therefore, every permutation that contains the term $x_{2h}$ exactly once, can be paired with the permutation that arises by taking its entries reflected along the main diagonal. Since \eqref{eq:hankel1} is invariant under reflection along the main diagonal, the monomials attached to these two permutations are equal. Since $\fq=\bbf_2$, they cancel each other and the polynomial $\HankelPolynomial_{h+1}$ has no monomials which are divisible by $x_{2h}$ but not by $x_{2h}^2$.
\end{proof}

Let $\alpha\in\fttt$. Without loss of generality, we may assume that $a_0(\alpha)=0$. 
We start with a proof of the first statement of Theorem \ref{hall_thm}. 
In view of \eqref{eq:913}, to show \eqref{laurent_representation} it is sufficient to show that there exist two infinite sequences $\{\beta_n\}_{n\in\bbn},\{\gamma_n\}_{n\in\bbn}\in\fq^\bbn$ such that for every $n\ge 1$ one has 
\begin{equation}
\label{sum_cond}
\alpha_n=\beta_n+\gamma_n\,,
\end{equation}
and all the determinants of the form \eqref{eq:tm_hankel} are non-zero:
\begin{align}
\label{nonzero_cond_beta}
&\HankelPolynomial_n(\beta_1,\ldots,\beta_{2n-1})\neq0\,,\quad\textrm{ and }
\\
\label{nonzero_cond_gamma}
&\HankelPolynomial_n(\gamma_1,\ldots,\gamma_{2n-1})\neq0\,.
\end{align}
Indeed, then 
$$
\beta:=\sum\limits_{n=1}^{\infty}\beta_n t^{-n}\quad\textrm{and}\quad
\gamma:=\sum\limits_{n=1}^{\infty}\gamma_n t^{-n}
$$
satisfy $\alpha=\beta+\gamma$ and $\beta,\gamma\in\calf(1)$.
We proceed by induction.

If $n=1$, choose any nonzero $\beta_1$ which is not $\alpha_1$, and $\gamma_1=\alpha_1-\beta_1$. This is possible since $\fq\neq\bbf_2$. Now 
suppose that $n\geq1$ and $\beta_i$ and $\gamma_i$ are defined for every $1\le i\le 2n-1$ and satisfy \eqref{nonzero_cond_beta} and \eqref{nonzero_cond_gamma}. Choose any $\beta_{2n}$, and define $\gamma_{2n}=\alpha_{2n}-\beta_{2n}$. Choose any $\beta_{2n+1}$ not in
\begin{equation}\label{eq:pigeonhole}
    \left\{-\frac{P_n(\beta_1,\ldots,\beta_{2n})}{\HankelPolynomial_{n}(\beta_1,\ldots,\beta_{2n-1})}\,,
    \alpha_{2n+1}+\frac{P_n(\gamma_1,\ldots,\gamma_{2n})}{\HankelPolynomial_{n}(\gamma_1,\ldots,\gamma_{2n-1})}\right\} \,.
\end{equation}
The set in \eqref{eq:pigeonhole} is well defined by the induction assumptions \eqref{nonzero_cond_beta} and \eqref{nonzero_cond_gamma}, and it does not cover $\fq$ since $|\fq|\geq3$. Define $\gamma_{2n+1} = \alpha_{2n+1} - \beta_{2n+1}$. It follows from \eqref{delta_np1_1eq} that $\HankelPolynomial_{n+1}(\beta_1,\ldots,\beta_{2n+1})$ and $\HankelPolynomial_{n+1}(\gamma_1,\ldots,\gamma_{2n+1})$ are nonzero.
This concludes the induction step.

We now turn to the case $\fq=\bbf_2$. First of all, the property \eqref{laurent_representation} does not hold, because, for example, $t^{-1}\notin\calf(1)+\calf(1)$. Indeed, if $\beta+\gamma = t^{-1}$, then either $\beta_1$ or $\gamma_1$ is zero, and therefore, either $\beta$ or $\gamma$ are not in $\calf(1)$, respectively. 

In view of \eqref{eq:913}, to show \eqref{laurent_representation21} it is sufficient to show that there exist two infinite sequences $\{\beta_n\}_{n\in\bbn},\{\gamma_n\}_{n\in\bbn}\in\fq^\bbn$ such that for every $n\ge 1$ one has \eqref{sum_cond}, and \eqref{nonzero_cond_gamma}, and 
\begin{equation}\label{eq:f2}
    \textrm{if }\,D_{n-1}(\beta_1,\ldots,\beta_{2n-3})=0\, \textrm{ then } D_{n}(\beta_1,\ldots,\beta_{2n-1})\neq0\,.
\end{equation}

\ignore{
For $n=1$, if $\alpha_1=1$, we put 
\begin{equation*}
\HankelPolynomial_{\beta}(2)=
\begin{vmatrix}
0 & 1 \\
1 & \alpha_3+\alpha_2
\end{vmatrix}
,\quad
\HankelPolynomial_{\gamma}(2)=
\begin{vmatrix}
1 & \alpha_2+1 \\
\alpha_2+1 & \alpha_2
\end{vmatrix}
.
\end{equation*}
If $\alpha_1=0$, we put 
\begin{equation*}
\HankelPolynomial_{\beta}(2)=
\begin{vmatrix}
1 & \alpha_2 \\
\alpha_2 & \alpha_3+1
\end{vmatrix}
,\quad
\HankelPolynomial_{\gamma}(2)=
\begin{vmatrix}
1 & 0 \\
0 & 1
\end{vmatrix}
.
\end{equation*}
}

For $n=1$, choose $\gamma_1=1$ and $\beta_1=\alpha_1+1$. 
\ignore{For $n=2$, if $\beta_1=0$ then choose $\beta_2=1$ and $\gamma_2=\alpha_2+1$, and then choose $\beta_3=\alpha_2+\alpha_3$ and $\gamma_3=\alpha_2$. Otherwise, if $\beta_1=1$ then choose $\beta_2$ arbitrary and $\gamma_2=\alpha_2+\beta_2$, and then choose $\beta_3=\gamma_2$ and $\gamma_3=\gamma_2+1$. It is easy to verify that \eqref{sum_cond}, \eqref{nonzero_cond_gamma} and \eqref{eq:f2} hold for $n=2$.}
Now suppose that $n\geq 1$ and that $\beta_i$ and $\gamma_i$ are defined for every $1\le i\le 2n-1$ and satisfy \eqref{nonzero_cond_gamma} and \eqref{eq:f2}. If 
\begin{equation}\label{eq:case1}
    \HankelPolynomial_n(\beta_1,\ldots,\beta_{2n-1})=0
\end{equation}
then choose 
\begin{equation}\label{eq:pigeonhole2}
\beta_{2n} \neq \frac{Q_n(\beta_1,\ldots,\beta_{2n-1})}{\HankelPolynomial_{n-1}(\beta_1,\ldots,\beta_{2n-3})}
\end{equation}
and put $\gamma_{2n}=\alpha_{2n}+\beta_{2n}$. Then choose 
\begin{equation}\label{eq:pigeonhole3}
\gamma_{2n+1} \neq \frac{P_n(\gamma_1,\ldots,\gamma_{2n})}{\HankelPolynomial_{n}(\gamma_1,\ldots,\gamma_{2n-1})}
\end{equation}
and $\beta_{2n+1} = \alpha_{2n+1}+\gamma_{2n+1}$. We recall that if $n=1$ then $D_{n-1}=1$ by the definition. Note that \eqref{eq:pigeonhole2} is well defined since the induction assumption \eqref{eq:f2} and \eqref{eq:case1} imply that $\HankelPolynomial_{n-1}(\beta_1,\ldots,\beta_{2n-3})\neq0$, while \eqref{eq:pigeonhole3} is well defined by the induction assumption \eqref{nonzero_cond_gamma}. Recall that $\fq=\bbf_2$, so $\beta_{2n}^2=\beta_{2n}$. Therefore, Lemma \ref{lem:inspection} implies that $\HankelPolynomial_{n+1}(\beta_1,\ldots,\beta_{2n+1})$ and $\HankelPolynomial_{n+1}(\gamma_1,\ldots,\gamma_{2n+1})$ are nonzero.

Otherwise, if \eqref{eq:case1} does not hold, choose $\beta_{2n}$ arbitrarily and $\gamma_{2n}=\alpha_{2n}+\beta_{2n}$, and then choose $\gamma_{2n+1}$ as in \eqref{eq:pigeonhole3}, and $\beta_{2n+1}=\alpha_{2n+1}+\gamma_{2n+1}$. Lemma \ref{lem:inspection} implies that $\HankelPolynomial_{n+1}(\gamma_1,\ldots,\gamma_{2n+1})\neq0$.
\ignore{
One can easily see that $P_1=\HankelPolynomial_{\beta}(n)$, $P_2=\HankelPolynomial_{\beta}(n-1)$. Note that $P_3=0$. Indeed, the matrix 
\begin{equation}
\label{beta_matrix}
\HankelPolynomial_{\beta}(n+1)=
\begin{vmatrix}
\beta_1 & \beta_2 & \ldots & \beta_{n-1} & \beta_n & \beta_{n+1} \\
\beta_2 & \beta_3 & \ldots & \beta_n & \beta_{n+1} & \beta_{n+2}\\
\vdots & \vdots &\ddots &\vdots & \vdots & \vdots\\
\beta_{n-1} & \beta_n & \ldots & \beta_{2n-3} & \beta_{2n-2} & \beta_{2n-1}\\ 
\beta_n & \beta_{n+1} & \ldots & \beta_{2n-2} & \beta_{2n-1} & \beta_{2n} \\
\beta_{n+1} & \beta_{n+2} &\ldots & \beta_{2n-1} & \beta_{2n} & \beta_{2n+1}.
\end{vmatrix}
\end{equation}
has two elements $\beta_{2n}$. Each monomial in the sum $P(\beta_1,\ldots,\beta_{2n+1})$ having exactly one element $\beta_{2n}$ arises from one of the two elements $\beta_{2n}$ of the matrix \eqref{beta_matrix}. Each monomial arising from the lower-left $\beta_{2n}$ is in one-to-one correspondence with the monomial arising from the upper-right $\beta_{2n}$. The bijection is provided by the reflection against the main diagonal. As the matrix is symmetric and we live in $\bbf_2$, the corresponding monomials cancel each other and we have $P_3=0$. Thus, taking into account that $\HankelPolynomial_{\gamma}(n-1)=\HankelPolynomial_{\gamma}(n)=1$ and $x^2=x$ in $\bbf_2$, we have
\begin{equation}
\begin{aligned}
\label{det_np1_expr}
\HankelPolynomial_{\beta}(n+1)=\beta_{2n+1}\HankelPolynomial_{\beta}(n)+&\beta_{2n}\HankelPolynomial_{\beta}(n-1)+P_4(\beta_1,\ldots,\beta_{2n-1})\\
\HankelPolynomial_{\gamma}(n+1)=\gamma_{2n+1}+&\gamma_{2n}+P_4(\gamma_1,\ldots,\gamma_{2n-1}).
\end{aligned}
\end{equation}
Taking into account the equations \eqref{sum_cond} for the indices $2n$ and $2n+1$, in total we have a system of four linear equations with four variables 
\begin{equation}
\label{matrix_4}
\begin{pmatrix}
\HankelPolynomial_{\beta}(n-1) & \HankelPolynomial_{\beta}(n) & 0 & 0 \\
0 & 0 & 1 & 1\\
1 & 0 & 1 & 0\\
0 & 1 & 0 & 1
\end{pmatrix}
\begin{pmatrix}
\beta_{2n}\\
\beta_{2n+1}\\
\gamma_{2n}\\
\gamma_{2n+1}
\end{pmatrix}
=
\begin{pmatrix}
\HankelPolynomial_{\beta}(n+1)+P_4(\beta_1,\ldots,\beta_{2n-1})\\
\HankelPolynomial_{\gamma}(n+1)+P_4(\gamma_1,\ldots,\gamma_{2n-1})\\
\alpha_{2n}\\
\alpha_{2n+1}
\end{pmatrix}.
\end{equation}
Note that if $\HankelPolynomial_{\beta}(n-1)=\HankelPolynomial_{\beta}(n)=1$, the matrix in the left-hand side of \eqref{matrix_4} is singular and therefore the equations $\HankelPolynomial_{\beta}(n+1)=\HankelPolynomial_{\gamma}(n+1)=1$ might not have a solution. However, the matrix
\begin{equation*}
\begin{pmatrix}
0 & 0 & 1 & 1\\
1 & 0 & 1 & 0\\
0 & 1 & 0 & 1
\end{pmatrix}
\end{equation*}
has rank $3$ thus, we can always set $\HankelPolynomial_{\gamma}(n+1)$ equal to $1$. 

Now we consider the case where one of the determinants $\HankelPolynomial_{\beta}(n)$ and $\HankelPolynomial_{\beta}(n-1)$ is zero while the other is not. In both situations, one can easily see that the matrix in \eqref{matrix_4} is invertible, and therefore we can set $\HankelPolynomial_{\beta}(n+1)=\HankelPolynomial_{\gamma}(n+1)=1$.
} 
This finishes the induction argument and the proof of Theorem \ref{hall_thm}.

\section{An analogue of Shulga's algorithm}\label{sec:shulga}
In \cite{shulga} Shulga established a nice and simple algorithm that decomposes real numbers into a sum of two real numbers with growing partial quotients. Theorems \ref{thm:S_analogue} and \ref{thm:Gprime_analogue} are immediate corollaries of an analogue of this algorithm for Laurent series and its properties, which are summarised in Theorem \ref{laurent_prop}. We first recall the real version of this algorithm, and some of its properties.

\begin{defn}[Shulga]\label{def:shulga}
For every $\alpha\in[0,1]$, define sequences $\{b_n\}_{n\geq1}$ and $\{c_n\}_{n\geq1}$, and real numbers $\beta$ and $\gamma$ as follows:
\begin{enumerate}
\item For every $n\geq0$, having the integers $b_j$ and $c_j$ defined for every $1\leq j\leq n$, if 
$$\alpha = [0;b_1,\ldots,b_{n}] + [0;c_1,\ldots,c_{n}]\,,$$ 
let
\begin{equation*}
    \beta\defeq [0;b_1,\ldots,b_{n}]\,,\quad\textrm{and} \quad \gamma\defeq [0;c_1,\ldots,c_{n}]\,,
\end{equation*}
and stop, otherwise, let
\begin{equation*}
    \begin{aligned}
        b_{n+1}\defeq a_{n+1}(\alpha-[0;c_1,\ldots,c_{n}])+1\,,\quad\textrm{and} \quad
        c_{n+1}\defeq a_{n+1}(\alpha-[0;b_1,\ldots,b_{n+1}])\,.
    \end{aligned}
\end{equation*}
\item If the algorithm does not stop after finitely many steps, let
\begin{equation*}
    \beta\defeq [0;b_1,b_2,\ldots]\,,\quad \textrm{and} \quad \gamma\defeq [0;c_1,c_2,\ldots]\,.
\end{equation*}
\end{enumerate}
\end{defn}
The properties of the sequences $\{b_n\}_{n\geq1}$ and $\{c_n\}_{n\geq1}$ provided by Definition \ref{def:shulga} were studied in 
\cite{shulga} and 
\cite{GN}, and the main results are summarised in the following statement:
\begin{thm}[\cite{shulga} and \cite{GN}]
\label{real_alg_prop}
Let $\alpha\in[0,1]$. Then:
\begin{enumerate}
\item
The numbers $\beta$ and $\gamma$ in Definition \ref{def:shulga} are well defined and satisfy 
\begin{equation*}
    \alpha=\beta+\gamma\,.  
\end{equation*} 
Moreover, for every $n\geq1$, if $b_n$ and $c_n$ of Definition \ref{def:shulga} are defined, then 
\begin{equation*}
\begin{aligned}
    b_i = a_i(\alpha-[0;c_1,\ldots,c_n])\,\quad\textrm{and}\quad 
    c_i = a_i(\alpha-[0;b_1,\ldots,b_n])\,\quad\textrm{for every $1\leq i\leq n$}\,.
\end{aligned}
\end{equation*}

\item
If $b_1$ and $c_1$ are defined, then 
$$
c_1\ge b_1(b_1-1)>1\,,
$$
for every $n\ge 2$, if $b_n$ and $c_n$ are defined, then
\begin{equation*}
    c_n\ge b_n+1\,,
\end{equation*}
and for every $n\ge 1$, if $b_n$, $b_{n+1}$ and $b_{n+2}$ are defined, then 
\begin{align*}
b_n&\ge n\,,\\
b_{n+1}&\ge b_n\,,\\
b_{n+2}&\ge b_n+1\,,
\end{align*}
respectively.
\item
If $\alpha\in\mathbb{Q}$ then $\beta$ and $\gamma$ are also rational, i.e., the algorithm in Definition \ref{def:shulga} stops after finitely many steps.
\end{enumerate}
\end{thm}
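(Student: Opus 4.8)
The final statement to prove is Theorem~\ref{real_alg_prop}, which collects the known properties of Shulga's real algorithm. Since the excerpt attributes this to \cite{shulga} and \cite{GN}, my plan is to reconstruct the proof from those sources rather than invent something new, organizing it around the three listed claims.

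\medskip

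\textbf{Approach and key steps.} First I would establish claim~(1), the structural identity tying the $b_i$'s and $c_i$'s to partial quotients of the ``residuals'' $\alpha-[0;c_1,\ldots,c_n]$ and $\alpha-[0;b_1,\ldots,b_n]$. The natural route is induction on $n$: assuming the identity holds at stage $n$, one uses the defining formulas $b_{n+1}=a_{n+1}(\alpha-[0;c_1,\ldots,c_n])+1$ and $c_{n+1}=a_{n+1}(\alpha-[0;b_1,\ldots,b_{n+1}])$ together with the elementary fact that prepending/appending partial quotients to a finite continued fraction and perturbing $\alpha$ by a quantity smaller than $|q_n|^{-2}$ does not change the first $n$ convergents (this is exactly the content of the real analogue of \eqref{eq:common4}). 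The crux is checking that the perturbation $[0;c_1,\ldots,c_{n+1}]-[0;c_1,\ldots,c_n]$ is small enough in absolute value that $a_1,\ldots,a_{n+1}$ of $\alpha-[0;c_1,\ldots,c_n]$ and of $\alpha-[0;c_1,\ldots,c_{n+1}]$ agree — here one exploits that $b_{n+1}$ was deliberately chosen one larger than $a_{n+1}$ of the residual, which makes the next tail strictly smaller and controls the error. Summability of the tails then gives that $\beta=[0;b_1,b_2,\ldots]$ and $\gamma=[0;c_1,c_2,\ldots]$ converge and that $\alpha=\beta+\gamma$, passing to the limit in the stage-$n$ identities.

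\medskip

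\textbf{Claims (2) and (3).} The inequalities in~(2) are where the algorithm's design does its work. The base inequality $c_1\ge b_1(b_1-1)$ comes from writing out $\gamma=\alpha-[0;b_1]$ and comparing $[0;b_1]=1/b_1$ against $\alpha$, using that $b_1$ exceeds $a_1(\alpha-[0;c_1,\ldots])$ by one; since $\beta>\alpha$'s relevant convergent, $\gamma$ is forced to be small, hence $c_1$ large, and a direct estimate yields the quadratic bound. For $n\ge2$ the inequality $c_n\ge b_n+1$ follows because at each step $c_{n}$ is the genuine partial quotient $a_{n}$ of a residual while $b_{n}$ was that same kind of quantity \emph{plus one} at the previous parity — one has to track carefully which residual is being expanded, but the ``$+1$'' in the definition of $b_{n+1}$ propagates into a gap of at least one. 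The monotonicity/growth chain $b_n\ge n$, $b_{n+1}\ge b_n$, $b_{n+2}\ge b_n+1$ is then an induction combining $c_n\ge b_n+1$ with $b_{n+1}=a_{n+1}(\text{residual})+1$ and the fact that replacing a partial quotient by a larger one in the subtracted convergent only enlarges the next partial quotient of the new residual. Finally~(3): if $\alpha\in\mathbb{Q}$ its continued fraction is finite, so the residuals are rational with bounded-length expansions, the $a_{n+1}$'s eventually become undefined, and the algorithm halts — this is a short argument once~(1) is in place, since the residual $\alpha-[0;c_1,\ldots,c_n]$ is a rational number whose denominator is controlled.

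\medskip

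\textbf{Main obstacle.} The delicate point is the bookkeeping in step~(1): showing that perturbing $\alpha$ by successive convergent-differences does not disturb the already-fixed partial quotients, i.e.\ that the algorithm is consistent and the two interleaved sequences $\{b_n\}$ and $\{c_n\}$ are simultaneously valid partial-quotient data for $\beta$ and $\gamma$. One must verify the relevant inequalities $\big|[0;c_1,\ldots,c_{n+1}]-[0;c_1,\ldots,c_n]\big|<|q_n(\beta)|^{-2}$ (and the symmetric one) at \emph{every} stage, and this is exactly where the lower bounds from~(2) feed back in — so claims (1) and (2) are somewhat entangled and should really be proved by a single simultaneous induction rather than sequentially. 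Everything else is a routine, if careful, unwinding of continued-fraction identities.
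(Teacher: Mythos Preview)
The paper does not prove Theorem~\ref{real_alg_prop} at all: it is stated as a summary of results from \cite{shulga} and \cite{GN}, and immediately after the statement the paper moves on to define and prove the Laurent-series analogue (Theorem~\ref{laurent_prop}). So there is no ``paper's own proof'' to compare your proposal against.

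That said, your sketch is sound and, in spirit, matches closely the strategy the paper uses for the Laurent analogue. There, the authors also run a single simultaneous induction: they isolate a Claim consisting of four interlocking inequalities (\eqref{qnp1_ge_2tn}--\eqref{eq:tn1_ge_2qn1_min_tn2}) on the degrees of the convergent denominators $q_j,t_j$, show that these inequalities imply both the consistency statement (item~\ref{item:1}) and the growth bounds (item~\ref{item:2}), and then establish the Claim inductively using \eqref{eq:common3} and \eqref{eq:common4}. Your observation that ``(1) and (2) are entangled and should be proved by a single simultaneous induction'' is exactly right and is precisely how the Laurent case is handled. The rationality part (item~3) is also dispatched there by a short denominator-growth argument, as you suggest. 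If you want a template to flesh out your sketch, the proof of Theorem~\ref{laurent_prop} is the right place to look; the real case is messier because of the lack of an ultrametric, which is why the inequalities in item~(2) are weaker (e.g.\ only $b_{n+2}\ge b_n+1$ rather than a gain at every step), but the architecture is the same.
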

    
We are now ready to define the Laurent series analogue of Shulga's algorithm: 
\begin{defn}\label{def:shulga_an}
For any $\alpha\in\fqttt$ such that $a_0(\alpha)=0$ define the sequences $\{b_n\}_{n\geq1}$ and $\{c_n\}_{n\geq1}$, and the Laurent series $\beta$ and $\gamma$ as follows:
\begin{enumerate}
\item
For every $n\geq0$, having the integers $b_j$ and $c_j$ defined for every $1\leq j\leq n$, if
\begin{equation*}
\alpha=[0;b_1,\ldots,b_{n}]+[0;c_1,\ldots,c_{n}]\,,
\end{equation*}
then let
$$\beta\defeq[0;b_1,\ldots,b_{n}]\,, \quad \textrm{and}\quad \gamma\defeq[0;c_1,\ldots,c_{n}]\,,$$
and stop. Otherwise, let
\begin{equation*}
b_{n+1} \defeq a_{n+1}(\alpha-[0;c_1,\ldots,c_{n}])\,.
\end{equation*}
If 
\begin{equation*}
\alpha=[0;b_1,\ldots,b_{n+1}]+[0;c_1,\ldots,c_{n}]\,,
\end{equation*}
then let 
$$\beta\defeq[0;b_1,\ldots,b_{n+1}]\,, \quad \textrm{and}\quad \gamma\defeq[0;c_1,\ldots,c_{n}]\,,$$ 
and stop. Otherwise, let 
\begin{equation*}
c_{n+1} \defeq a_{n+1}(\alpha-[0;b_1,\ldots,b_{n+1}])\,.
\end{equation*}
\item If the algorithm does not stop after finitely many steps, let 
$$\beta\defeq[0;b_1,b_2,\ldots]\,, \quad \textrm{and}\quad \gamma\defeq[0;c_1,c_2,\ldots]\,.$$ 
\end{enumerate}
\end{defn}


The properties of the polynomials $b_n$ and $c_n$ of Definition \ref{def:shulga_an} are established in the following analogue of Theorem \ref{real_alg_prop}:
\begin{thm}
\label{laurent_prop}
Let $\alpha\in\fttt$ be such that $a_0(\alpha)=0$. Then:
\begin{enumerate}
\item\label{item:1}{
The Laurent series $\beta$ and $\gamma$ in Definition \ref{def:shulga_an} are well defined and satisfy 
\begin{equation*}
    \alpha=\beta+\gamma\,.    
\end{equation*} 
Moreover, for every $n\geq1$, if $b_n$ is defined, then
\begin{equation}
\label{ck_well_defined}
    c_i = a_i(\alpha-[0;b_1,\ldots,b_n])\,,\quad\textrm{for every $1\leq i\leq n-1$}\,,
\end{equation}
and if $c_n$ is defined, then
\begin{equation}
\label{bk_well_defined}
    \begin{cases*}
        b_i = a_i(\alpha-[0;c_1,\ldots, c_n])\\
        c_i = a_i(\alpha-[0;b_1,\ldots,b_n])
    \end{cases*},
    \quad\textrm{for every $1\leq i\leq n$}\,.
\end{equation}
}
\item\label{item:2}
{If $c_1$ is defined, then 
\begin{equation}\label{eq:first_digits}
    \deg(c_1)\geq 2\deg(b_1)+1\,,
\end{equation} 
and for every $n\ge 2$, if $b_n$ and $c_n$ are defined, then
\begin{align*}
\deg(b_n) &\geq \deg(c_{n-1})+2\,,\quad\textrm{and}\\
\deg(c_n) &\geq \deg(b_n)+2\,,
\end{align*}
respectively.
}
\item
{If $\alpha=p/q$ is a rational function, then the algorithm stops after at most $\deg(q)/2$ steps, and in particular, $\beta$ and $\gamma$ are also rational functions.
}
\end{enumerate}
\end{thm}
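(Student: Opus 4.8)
\textbf{Proof plan for Theorem \ref{laurent_prop}.}

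The plan is to mirror the structure of the real-variable analysis from \cite{shulga} and \cite{GN}, exploiting the fact that in the Laurent setting the ultrametric inequality replaces the Archimedean triangle inequality, which makes the estimates cleaner. I would prove the three items in order, since item (1) is needed to make sense of the degrees in item (2), and item (3) follows quickly from the degree growth established in item (2).

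For item (1), I would argue by induction on $n$, following the algorithm step by step. The key identity to maintain is that after the $n$th round, $\beta^{(n)} := [0;b_1,\ldots,b_n]$ and $\gamma^{(n)}:=[0;c_1,\ldots,c_n]$ are ``close'' to pieces of $\alpha$ in the following sense: $\alpha - \beta^{(n)}$ and $\alpha-\gamma^{(n)}$ are Laurent series of strictly negative degree whose continued fraction expansions begin with the prescribed partial quotients. Concretely, using \eqref{eq:common3} one controls $|\gamma-\gamma^{(n)}|$ and $|\beta-\beta^{(n)}|$, and then using \eqref{eq:common4} (the Legendre-type statement) one shows that $\beta^{(n)}$ is genuinely the $n$th convergent of $\alpha-\gamma^{(n')}$ for all sufficiently large $n'$, which yields \eqref{ck_well_defined} and \eqref{bk_well_defined}. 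The definition of $b_{n+1}$ and $c_{n+1}$ as partial quotients of $\alpha$ minus the current approximation of the other series guarantees consistency, and one must check that the algorithm cannot ``get stuck'' — i.e., that $a_{n+1}$ is always defined when the stopping condition fails — which is where the degree lower bounds from item (2) feed back in to show the tails go to $-\infty$.

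For item (2), the heart of the matter, I would analyze a single step of the algorithm. Set $\delta := \alpha - \beta^{(n)}$, so that by construction $c_n = a_n(\delta)$ and the earlier $c_i$ agree with the partial quotients of $\delta$; similarly set $\eta := \alpha - \gamma^{(n-1)}$, so $b_n = a_n(\eta)$. The point is to compare the convergents of $\delta$ with those of $\eta$. Because $\beta^{(n)} - \beta^{(n-1)}$ has a definite degree governed by \eqref{eq:common1} and \eqref{eq:common3}, one gets $|\delta - (\alpha-\beta^{(n-1)})|$ small, and writing $\alpha - \beta^{(n-1)} = \gamma^{(n-1)} + (\eta - \gamma^{(n-1)})$... actually the cleaner route is: $\alpha - \beta^{(n)} - \gamma^{(n-1)}$ is small in absolute value by the choice of $b_n$, and $\alpha - \beta^{(n)} - \gamma^{(n)}$ is even smaller by the choice of $c_n$; comparing these two and using \eqref{eq:common3} with the recursion \eqref{eq:common1} forces $\deg(c_n) \geq \deg(b_n) + 2$. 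The inequality $\deg(b_n) \geq \deg(c_{n-1})+2$ is obtained by the symmetric argument one step later, comparing $\alpha - \beta^{(n-1)} - \gamma^{(n-1)}$ with $\alpha - \beta^{(n)} - \gamma^{(n-1)}$. The base case $\deg(c_1) \geq 2\deg(b_1)+1$ needs separate bookkeeping: here $\gamma^{(0)} = 0$, so $b_1 = a_1(\alpha)$, and one computes the degree of $\alpha - [0;b_1]$ directly, the factor $2$ coming from $|q_1(\eta)|^2 = |b_1|^2$ appearing in \eqref{eq:common3}; the $+1$ versus $+2$ discrepancy reflects that the very first step starts from $\gamma^{(0)}=0$ rather than a genuine convergent. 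I expect this base case, and keeping straight which approximation is being subtracted at each stage, to be the main source of friction — the inductive steps are essentially one application of the ultrametric inequality plus \eqref{eq:common1}, but the indexing is delicate.

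For item (3), suppose $\alpha = p/q$ with $\deg(q) = D$. By item (2), the degrees $\deg(b_1), \deg(c_1), \deg(b_2), \deg(c_2), \ldots$ form a strictly increasing sequence of positive integers (each successive term exceeds the previous by at least $2$, and at least $1$ across the $c_1$-to-$b_1$ boundary, but certainly all $\geq 1$). Hence $\deg\bigl(q_n(\beta)\bigr)$ and $\deg\bigl(q_n(\gamma)\bigr)$ grow at least linearly with slope at least $2$ per completed round by \eqref{eq:common1}. On the other hand, if the algorithm has not stopped by step $n$, then $\beta^{(n)} \neq \alpha - \gamma^{(n)}$... more precisely $|\alpha - \beta^{(n)} - \gamma^{(n)}| > 0$ but it is bounded above by something like $|q_n(\beta)|^{-1}|q_{n+1}(\gamma)|^{-1}$, which decays; once this quantity drops below $|q|^{-2}$ (which happens once $\deg q_n(\beta) + \deg q_n(\gamma)$ exceeds roughly $2\deg q$, i.e. after $O(D)$ steps), Legendre's theorem \eqref{eq:common4} forces $\alpha - \gamma^{(n)}$ to have $\beta^{(n)}$ among its convergents, and a counting argument on the total degree shows the only consistent possibility is that the stopping condition has already been met. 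Tightening the constant to get exactly $\deg(q)/2$ requires noting that a completed round $n$ contributes at least $2$ to $\deg(q_n(\beta))$ and at least $2$ to $\deg(q_n(\gamma))$, so $2n \leq \deg(q_n(\beta)) \leq \deg q$ once $\beta^{(n)}$ is forced to be a convergent of the rational function $\alpha - \gamma$ of denominator degree at most $\deg q$; I would present the bound in that form. The rationality of $\beta$ and $\gamma$ is then immediate since both are finite continued fractions.
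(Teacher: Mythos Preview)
Your plan for items~\ref{item:1} and~\ref{item:2} is essentially the paper's own argument: one proves, by induction on the step index, the four interlocking statements
\[
\deg(t_{j-1})-\deg(q_{j-1}) < \deg(q_j)-\deg(t_{j-1}),\qquad
\left|\alpha-\frac{p_j}{q_j}-\frac{s_{j-1}}{t_{j-1}}\right|<\frac{1}{|t_{j-1}|^2},
\]
and their $c$-counterparts, from which \eqref{ck_well_defined}, \eqref{bk_well_defined} and the degree jumps of item~\ref{item:2} all follow. Your remark that the well-definedness check and the degree growth must ``feed back'' into each other is exactly why the paper packages the two items into a single inductive claim rather than proving item~\ref{item:1} first; your proposal would have to do the same.

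Your argument for item~(3), however, has a gap. You write that $\beta^{(n)}$ is a convergent of ``the rational function $\alpha-\gamma$ of denominator degree at most $\deg q$'' and conclude $\deg q_n(\beta)\le\deg q$. But at step $n$ the only rational function available is $\alpha-\gamma^{(n)}=\alpha-\frac{s_n}{t_n}$, whose denominator has degree at most $\deg q+\deg t_n$, not $\deg q$; so the bound $\deg q_n(\beta)\le\deg q$ does not follow, and individually $\deg q_n$ and $\deg t_n$ can far exceed $\deg q$. The paper sidesteps this by tracking the \emph{difference} $\deg t_n-\deg q_n$: the inductive degree inequalities give $\deg t_n-\deg q_n\ge 2n$, while the fact that $\frac{s_n}{t_n}$ is a convergent of the rational function $\alpha-\frac{p_n}{q_n}$ (denominator degree $\le\deg q+\deg q_n$) gives $\deg t_n-\deg q_n\le\deg q$. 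Combining yields $2n\le\deg q$ directly. Replacing your last paragraph with this difference argument fixes the proof.
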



\begin{proof}
We start by proving items \ref{item:1} and \ref{item:2}.
Let $\alpha\in\fttt$ be a fixed Laurent series such that $a_0(\alpha)=0$. For the purpose of this proof, we will adopt the following notation: given $n\geq0$, if $b_1,\ldots,b_n$ are defined as in Definition \eqref{def:shulga_an}, then let
\begin{equation*}
    \frac{p_n}{q_n}\defeq[0;b_1,\ldots,b_n]\,,
\end{equation*}
and if $c_1,\ldots,c_{n}$ are defined as in Definition \ref{def:shulga_an} then let
\begin{equation*}
    \frac{s_n}{t_n}\defeq[0;c_1,\ldots,c_n]\,.    
\end{equation*}
Note that $q_0=t_0=1$. We will prove the following statement which implies both items \ref{item:1} and \ref{item:2}: 

\begin{claim*}
For every $j\geq1$, if $b_j$ is defined, then
\begin{align}
\label{qnp1_ge_2tn}
\deg(t_{j-1})-\deg(q_{j-1}) &< \deg(q_{j}) - \deg(t_{j-1})\,,\quad\textrm{and}\\
\label{eq:qnp1_ge_2tn}
\left|\alpha - \frac{p_{j}}{q_{j}} - \frac{s_{j-1}}{t_{j-1}}\right| &< \frac{1}{|t_{j-1}|^2}\,,
\end{align}
and if $c_j$ is defined, then
\begin{align}
\label{tn1_ge_2qn1_min_tn2}
\deg(q_{j})-\deg(t_{j-1}) &< \deg(t_{j}) - \deg(q_{j})\,,\quad\textrm{and}\\
\label{eq:tn1_ge_2qn1_min_tn2}
\left|\alpha - \frac{p_{j}}{q_{j}} - \frac{s_j}{t_j}\right| &< \frac{1}{|t_j|^2}\,.
\end{align}
\end{claim*}
We first show that the above claim implies item \ref{item:1}. Indeed, 
if $b_n$ is defined, then by \eqref{eq:common3} and \eqref{eq:common4}, equation \eqref{ck_well_defined} is equivalent to \eqref{eq:qnp1_ge_2tn} with $j=n$. Similarly, for every $j$ such that $c_j$ is defined, equations \eqref{qnp1_ge_2tn} and \eqref{tn1_ge_2qn1_min_tn2} imply that 
\begin{equation*}
\deg(t_{j})-\deg(q_{j})\geq \deg(t_{j-1})-\deg(q_{j-1})+2\,,
\end{equation*}
where the inequality follows from the fact that if $x<y<z$ are three integers, then $z-x\geq2$. Since $c_i$ is defined for every $1\leq i \leq j$, this implies that
\begin{equation}
\label{tn_qn_plus2}
\deg(t_{j})-\deg(q_{j})\ge 2j\,.
\end{equation}
In particular, equation \eqref{tn_qn_plus2} gives 
\begin{equation}
\label{tj>qj}
|q_j|<|t_j|\,.
\end{equation}
Therefore, if $c_n$ is defined then \eqref{eq:tn1_ge_2qn1_min_tn2} and \eqref{tj>qj} with $j=n$ give \eqref{bk_well_defined}.

We now verify that the above claim implies item \ref{item:2}. Indeed, assume that $n\geq2$ and that $b_n$ is defined. Recall that \eqref{eq:common1} gives 
\begin{equation*}
    \deg(b_n)=\deg(q_n)-\deg(q_{n-1})\,,\quad\textrm{and}\quad \deg(c_{n-1})=\deg(t_{n-1})-\deg(t_{n-2})\,. 
\end{equation*}
On the other hand, equation \eqref{qnp1_ge_2tn} with $j=n$ and \eqref{tn1_ge_2qn1_min_tn2} with $j=n-1$ give
$$
\deg(q_{n-1})-\deg(t_{n-2}) < \deg(t_{n-1}) - \deg(q_{n-1}) < \deg(q_{n})-\deg(t_{n-1})\,.
$$
Therefore, 
$$
\deg(b_{n})-\deg(c_{n-1}) = (\deg(q_{n}) - \deg(t_{n-1})) - (\deg(q_{n-1}) - \deg(t_{n-2}))\geq2\,,
$$
Similarly, if $c_n$ is defined, then equations \eqref{qnp1_ge_2tn} and \eqref{tn1_ge_2qn1_min_tn2} with $j=n$ imply that
$$
\deg(c_{n})-\deg(b_{n}) = (\deg(t_{n}) - \deg(q_{n})) - (\deg(t_{n-1}) - \deg(q_{n-1}))\geq2\,.
$$
Finally, for $n=1$, equation \eqref{tn1_ge_2qn1_min_tn2} becomes \eqref{eq:first_digits}.

We now prove the above claim by induction on $j$. Assuming that the claim holds for $j$, we will prove it for $j+1$. By \eqref{eq:common4}, \eqref{eq:tn1_ge_2qn1_min_tn2} and \eqref{tj>qj} we find that $\frac{p_j}{q_j}$ is a convergent of $\alpha-\frac{s_j}{t_j}$. Therefore, if $b_{j+1}$ is defined, then by \eqref{eq:common3} we conclude that 
$$
    \left|\alpha - \frac{p_j}{q_j} - \frac{s_j}{t_j}\right| = \frac{1}{|q_j|\,|q_{j+1}|}\,.
$$
Applying \eqref{eq:tn1_ge_2qn1_min_tn2} again gives 
\begin{equation}\label{eq:inductive_step}
    \deg(t_{j})-\deg(q_j)<\deg(q_{j+1})-\deg(t_{j})\,.
\end{equation}
This completes the induction step for \eqref{qnp1_ge_2tn}. 

Since $\frac{p_j}{q_j}$ is a convergent of $\alpha-\frac{s_j}{t_j}$, the definition of $b_{n+1}$ implies that $\frac{p_{j+1}}{q_{j+1}}$ is also a convergent of $\alpha-\frac{s_j}{t_j}$. Therefore, 
\begin{equation}\label{eq:stronger}
    \left|\alpha - \frac{p_{j+1}}{q_{j+1}} - \frac{s_j}{t_j}\right| < \frac{1}{|q_{j+1}|^2}\,.
\end{equation}
Equation \eqref{eq:inductive_step} implies that $|t_j|<|q_{j+1}|$. Therefore, 
\begin{equation}\label{eq:inductive_step2}
    \left|\alpha - \frac{p_{j+1}}{q_{j+1}} - \frac{s_j}{t_j}\right| < \frac{1}{|t_{j}|^2}\,.
\end{equation}
This completes the induction step for \eqref{eq:qnp1_ge_2tn}.

Continuing in the same fashion, equation \eqref{eq:inductive_step2} shows that $\frac{s_j}{t_j}$ is a convergent of $\alpha - \frac{p_{j+1}}{q_{j+1}}$. If $c_{j+1}$ is defined then \eqref{eq:common3} implies that 
$$
    \left|\alpha - \frac{p_{j+1}}{q_{j+1}} - \frac{s_j}{t_j}\right| = \frac{1}{|t_j|\,|t_{j+1}|}\,.
$$
Therefore, equation \eqref{eq:stronger} implies that
\begin{equation*}
    \deg(q_{j+1})-\deg(t_{j}) < \deg(t_{j+1})-\deg(q_{j+1})\,.
\end{equation*}
This completes the induction step for \eqref{tn1_ge_2qn1_min_tn2}.

Since $\frac{s_j}{t_j}$ is a convergent of $\alpha-\frac{p_{j+1}}{q_{j+1}}$, the definition of $c_{j+1}$ implies that $\frac{s_{j+1}}{t_{j+1}}$ is also a convergent, of $\alpha-\frac{p_{j+1}}{q_{j+1}}$. Therefore, 
\begin{equation*}
\left|\alpha - \frac{p_{j+1}}{q_{j+1}} - \frac{s_{j+1}}{t_{j+1}}\right| < \frac{1}{|t_{j+1}|^2}\,.
\end{equation*}
This completes the induction step for \eqref{eq:tn1_ge_2qn1_min_tn2}, and the proof of the above claim.

\ignore{
Consider the case $n=1$. We have
\begin{equation*}    
b_1=a_1\left(\alpha-\frac{s_0}{t_0}\right)=a_1\left(\alpha\right), \quad
c_1=a_1\left(\alpha-\frac{p_1}{q_1}\right).    
\end{equation*}
and the second inequality in \eqref{bk_ck_well_defined} is satisfied. Moreover, as $\frac{s_1}{t_1}=\frac{1}{c_1}$, we have
\begin{equation*}
\deg(t_1)=\deg(c_1)=\left|\alpha-\frac{p_1}{q_1}\right|=\bigl|[0;b_1,a_2(\alpha),\ldots]-[0;b_1]\bigr|=\frac{1}{2^{2 \deg(b_1)+\deg(a_2(\alpha))}}.
\end{equation*}

Thus,
\begin{equation}
\label{c1_ge_b1}
\deg(t_1)=\deg(c_1)\ge 2\deg(b_1)+1\ge\deg(b_1)+2.
\end{equation}
Now we show that
\begin{equation}
\label{b_1_correct}
b_1=a_1\left(\alpha-\frac{s_1}{t_1}\right).
\end{equation}
Note that
\begin{equation}
\label{alpha_minus11}    
\left|\alpha-\frac{s_{1}}{t_{1}}-\frac{p_{1}}{q_{1}}\right|=
\left|\underbrace{\left(\alpha-\frac{p_{1}}{q_{1}}\right)}_{[0;c_1,\ldots]}-\underbrace{\frac{s_{1}}{t_{1}}}_{[0;c_1]}\right|\le\frac{1}{2^{2\deg(t_{1})+1}}<\frac{1}{2^{2\deg(q_{1})}}.
\end{equation}
Due to (\ref{eq:common4}), the established inequality
\begin{equation}
\label{alpha_minus11_2}    
\left|\left(\alpha-\frac{s_{1}}{t_{1}}\right)-\frac{p_{1}}{q_{1}}\right|<\frac{1}{2^{2\deg(q_{1})}}=\frac{1}{2^{2\deg(b_{1})}}
\end{equation}
is equivalent to the fact that $\frac{p_1}{q_1}$ is the best approximation to $\alpha-\frac{s_{1}}{t_{1}}$. Thus we established the first statement of  \eqref{bk_ck_well_defined} for $n=1$. Applying the definition of $b_2$, we see that
\begin{equation*}
\alpha-\frac{s_{1}}{t_{1}}=[0;b_1,b_2,\ldots].
\end{equation*}
and therefore
\begin{equation}
\label{alpha_minus11_3}    
\left|\left(\alpha-\frac{s_{1}}{t_{1}}\right)-\frac{p_{1}}{q_{1}}\right|=\frac{1}{2^{2\deg(b_{1})+\deg(b_2)}}\le\frac{1}{2^{2\deg(c_{1})+1}}.
\end{equation}
Applying \eqref{c1_ge_b1}, we obtain
\begin{equation}
\label{b1b2_ineq}
2\deg(b_{1})+\deg(b_2)\ge 2\deg(c_{1})+1\ge \deg(c_{1})+2\deg(b_1)+2
\end{equation}
thus establishing the second inequality of \eqref{eq:main-shulga_an} for $n=1$. 


One can easily see that \eqref{qnp1_ge_2tn} for $n=1$ is equivalent to \eqref{b1b2_ineq}.

We start the induction argument.

We recall that 
\begin{align*}
b_{n+1}=a_{n+1}\left(\alpha-\frac{s_{n}}{t_{n}}\right),&\quad \frac{p_{n+1}}{q_{n+1}}=[0;b_1,\ldots,b_n,b_{n+1}]\\ c_{n+1}:=a_{n+1}\left(\alpha-\frac{p_{n+1}}{q_{n+1}}\right),&\quad \frac{s_{n+1}}{t_{n+1}}=[0;c_1,\ldots,c_n,c_{n+1}].
\end{align*}
Note that the induction assumption \eqref{eq:main-shulga_an}
implies that 
\begin{equation}
\label{denom_monot}
\deg(q_{n+1})>\deg(t_n)>\deg(q_n).
\end{equation}
We start with showing that
\begin{equation}
\label{cn_correct}
\alpha-\frac{p_{n+1}}{q_{n+1}}=[0;c_1,c_2,\ldots,c_{n},\ldots]
\end{equation}
which is equivalent to
\begin{equation}
\label{alpha_minusnn1}    
\left|\alpha-\frac{p_{n+1}}{q_{n+1}}-\frac{s_{n}}{t_{n}}\right|<\frac{1}{2^{2\deg(t_{n})}}.
\end{equation}
Indeed, due to the induction assumption \eqref{bk_ck_well_defined} we have
\begin{equation}
\label{alpha_sn1_pn_new}
\left|\alpha-\frac{p_{n+1}}{q_{n+1}}-\frac{s_{n}}{t_{n}}\right|=
\left|\underbrace{\left(\alpha-\frac{s_{n}}{t_{n}}\right)}_{[0;b_1,\ldots,b_{n+1},\ldots]}-\underbrace{\frac{p_{n+1}}{q_{n+1}}}_{[0;b_1,\ldots,b_{n+1}]}\right|\le\frac{1}{2^{2\deg(q_{n+1})+1}}<\frac{1}{2^{2\deg(t_{n})}}.
\end{equation}
The last inequality in \eqref{alpha_sn1_pn_new} follows from \eqref{denom_monot} and we established the second property in \eqref{bk_ck_well_defined} for index $n+1$. Hence
\begin{equation}
\label{alpha_pn_sn1}
\left|\alpha-\frac{p_{n+1}}{q_{n+1}}-\frac{s_{n}}{t_{n}}\right|=
\left|\underbrace{\left(\alpha-\frac{p_{n+1}}{q_{n+1}}\right)}_{[0;c_1,\ldots,c_{n+1},\ldots]}-\underbrace{\frac{s_{n}}{t_{n}}}_{[0;c_1,\ldots,c_{n}]}\right|=\frac{1}{2^{\deg(t_{n+1})+\deg(t_n)}}=\frac{1}{2^{2\deg(t_{n})+\deg(c_{n+1})}}.
\end{equation}
From \eqref{alpha_sn1_pn_new} and \eqref{alpha_pn_sn1} we see that
\begin{equation}
2\deg(t_{n})+\deg(c_{n+1})\ge 2\deg(q_{n+1})+1.
\end{equation}
Applying the induction assumption \eqref{qnp1_ge_2tn}, we obtain
\begin{equation}
2\deg(t_{n})+\deg(c_{n+1})\ge \deg(q_{n+1})+2\deg(t_{n})-\deg(q_{n})+2=2\deg(t_{n})+\deg(b_{n+1})+2.
\end{equation}
Therefore, the first inequality in \eqref{eq:main-shulga_an} is satisfied for index $n+1$. From \eqref{alpha_sn1_pn_new} and \eqref{alpha_pn_sn1} one can also deduce that

Also, we see that $\deg(t_{n+1})>\deg(q_{n+1})$.

Now we prove the property
\begin{equation}
\label{alpha_sn1tn1_correct}
\alpha-\frac{s_{n+1}}{t_{n+1}}=[0;b_1,\ldots,b_{n+1},\ldots]  
\end{equation}
which is equivalent to
\begin{equation}
\label{alpha_sn2_pn2}
\left|\alpha-\frac{s_{n+1}}{t_{n+1}}-\frac{p_{n+1}}{q_{n+1}}\right|<\frac{1}{2^{2\deg(q_{n+1})}}.
\end{equation}
One can easily deduce \eqref{alpha_sn2_pn2} from the inequality
\begin{equation}
\label{alpha_pn1_sn1_2}
\left|\alpha-\frac{p_{n+1}}{q_{n+1}}-\frac{s_{n+1}}{t_{n+1}}\right|=
\left|\underbrace{\left(\alpha-\frac{p_{n+1}}{q_{n+1}}\right)}_{[0;c_1,\ldots,c_{n+1},\ldots]}-\underbrace{\frac{s_{n+1}}{t_{n+1}}}_{[0;c_1,\ldots,c_{n+1}]}\right|\le\frac{1}{2^{2\deg(t_{n+1})+1}}.
\end{equation}
Now we continue \eqref{alpha_sn1tn1_correct} as 
\begin{equation}
\label{alpha_sn1tn1_correct_2}
\alpha-\frac{s_{n+1}}{t_{n+1}}=[0;b_1,\ldots,b_{n+1},b_{n+2},\ldots]  
\end{equation}
and write down \eqref{alpha_sn2_pn2} as 
\begin{equation}
\label{alpha_sn2_pn2_bnp2}
\left|\alpha-\frac{s_{n+1}}{t_{n+1}}-\frac{p_{n+1}}{q_{n+1}}\right|=\frac{1}{2^{2\deg(q_{n+1})+\deg(b_{n+2})}}=\frac{1}{2^{\deg(q_{n+1})+\deg(q_{n+2})}}. 
\end{equation}
Comparing \eqref{alpha_pn1_sn1_2} and \eqref{alpha_sn2_pn2_bnp2}, we obtain the induction step for the inequality \eqref{qnp1_ge_2tn}. 
Applying the inequality \eqref{tn1_ge_2qn1_min_tn2}, we have
\begin{align*}
2\deg(q_{n+1})+\deg(b_{n+2})\ge& 2\deg(t_{n+1})+1\ge\\ \deg(t_{n+1})+2\deg(q_{n+1})-\deg(t_n)+2=&2\deg(q_{n+1})+\deg(c_{n+1})+2
\end{align*}
which yields $\deg(b_{n+2})\ge\deg(c_{n+1})+2$. Thus, the induction step for the properties \eqref{bk_ck_well_defined} and \eqref{eq:main-shulga_an} is finished.}

Finally, consider the case where $\alpha=p/q$ is a rational function.
From \eqref{tn_qn_plus2}, if $n\geq1$ is such that $c_n$ is defined, then we have 
\begin{equation*}
\deg(t_n)-\deg(q_n)\geq 2n\,.
\end{equation*}

On the other hand, item \ref{item:1} means that $\frac{s_n}{t_n}$ is a best approximation of $\alpha-\frac{p_n}{p_n}$. As the degree of the denominator of $\alpha-\frac{p_n}{q_n}$ is at most $\deg(q_n)+\deg(q)$, we obtain the inequality  
\begin{equation*}
\deg(t_n)-\deg(q_n)\leq\deg(q)\,.
\end{equation*}
Therefore, $n\leq \deg(q)/2$, which completes the proof.
\end{proof}

In fact, the bounds in Theorem \ref{laurent_prop}\eqref{item:2} are optimal, as can be seen by the following example:

\begin{prop}
For any field $\fq$ consider the infinite continued fractions
\begin{equation}
\label{beta_gamma_def}
\beta=[0;t,t^5,t^9,t^{13},\ldots]\quad\text{and}\quad
\gamma=[0;t^3,t^7,t^{11},t^{15},\ldots]\,,
\end{equation}
and put $\alpha=\beta+\gamma$. Then the decomposition of $\alpha$ as in Definition \ref{def:shulga_an} gives back $\beta$ and $\gamma$.    
\end{prop}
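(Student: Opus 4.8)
The plan is to verify directly that running the algorithm of Definition~\ref{def:shulga_an} on $\alpha=\beta+\gamma$ recovers the partial quotients $b_n=t^{4n-3}$ and $c_n=t^{4n-1}$, by induction on $n$. The key tool is equation~\eqref{eq:common3}, which lets us compute the absolute value of a tail error $\alpha-p_n/q_n$ once we know the degrees of the relevant denominators. First I would record, for $\beta$ as in \eqref{beta_gamma_def}, that $\deg(q_n)=1+5+9+\cdots+(4n-3)=2n^2-n$ by \eqref{eq:common1}, and similarly for $\gamma$ the convergent denominators $t_n$ satisfy $\deg(t_n)=3+7+\cdots+(4n-1)=2n^2+n$; note in particular that $\deg(t_{n-1})=2n^2-3n+1<2n^2-n=\deg(q_n)$ and $\deg(q_n)<2n^2+n=\deg(t_n)$, so the degree interlacing $\deg(q_1)<\deg(t_1)<\deg(q_2)<\deg(t_2)<\cdots$ required by Theorem~\ref{laurent_prop}\eqref{item:2} holds with equality in the inequalities $\deg(c_n)\ge\deg(b_n)+2$ and $\deg(b_n)\ge\deg(c_{n-1})+2$ (here ``$+2$'' because $(4n-1)-(4n-3)=2$ and $(4n+1)-(4n-1)=2$), which is exactly why this example is extremal.

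The inductive claim to carry is: with $p_n/q_n=[0;b_1,\ldots,b_n]$ and $s_n/t_n=[0;c_1,\ldots,c_n]$ the convergents of $\beta$ and $\gamma$, the algorithm does not stop at step $n$, and it produces $b_{n+1}=t^{4n+1}$ and $c_{n+1}=t^{4n+3}$. For the $b$-step: the algorithm sets $b_{n+1}=a_{n+1}(\alpha-s_n/t_n)=a_{n+1}(\beta+\gamma-s_n/t_n)$. Since $\gamma-s_n/t_n$ has absolute value $|t_n|^{-1}|t_{n+1}|^{-1}=2^{-(4n^2+2n)-(4n^2+6n+3)}$ by \eqref{eq:common3}, which is far smaller than $|\beta-p_n/q_n|=2^{-(4n^2-2n)-(4n^2+2n+1)}$, the Laurent series $\alpha-s_n/t_n$ agrees with $\beta$ in all coefficients up to the relevant order, so its first $n+1$ partial quotients coincide with those of $\beta$; in particular $a_{n+1}(\alpha-s_n/t_n)=a_{n+1}(\beta)=t^{4n+1}$. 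Quantitatively, one checks $|\gamma - s_n/t_n| < |\beta-p_{n+1}/q_{n+1}|$, i.e.\ $4n^2+2n + 4n^2+6n+3 > 4n^2+2n + 4n^2+10n+1$, wait --- this must be checked with the correct inequality direction, so the careful bookkeeping here is where I would be most cautious. The cleaner route is to invoke \eqref{eq:common4}: $p_{n+1}/q_{n+1}$ is a convergent of $\alpha-s_n/t_n$ provided $|\alpha-s_n/t_n-p_{n+1}/q_{n+1}|<|q_{n+1}|^{-2}$, and the left side equals $|\gamma-s_n/t_n|$ which one verifies is $<2^{-2\deg(q_{n+1})}$; hence $b_{n+1}=a_{n+1}(\alpha-s_n/t_n)=a_{n+1}(\beta)$. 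The $c$-step is symmetric: $c_{n+1}=a_{n+1}(\alpha-p_{n+1}/q_{n+1})$, and since $|\beta-p_{n+1}/q_{n+1}|$ is small enough that $s_{n+1}/t_{n+1}$ is forced to be a convergent of $\alpha-p_{n+1}/q_{n+1}$ by \eqref{eq:common4}, we get $c_{n+1}=a_{n+1}(\gamma)=t^{4n+3}$.

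Finally I would check the base case $n=0$: $\alpha-s_0/t_0=\alpha=\beta+\gamma$ with $|\gamma|=2^{-3}<2^{-2}=2^{-2\deg(q_1)}\cdot2^{0}$... again the constants must be matched, but morally $|\gamma|=2^{-3}$ is small enough that $p_1/q_1=1/t$ is a convergent of $\alpha$ and $a_1(\alpha)=a_1(\beta)=t$, then $|\beta-p_1/q_1|=2^{-(2+3)}=2^{-5}$ is small enough that $s_1/t_1=1/t^3$ is a convergent of $\alpha-p_1/q_1$ and $a_1(\alpha-p_1/q_1)=a_1(\gamma)=t^3$. Since $\alpha$ is irrational (both $\beta,\gamma$ have unbounded partial quotient degrees, hence are not in $\ftt$, and their sum, having the degree pattern above, is visibly non-eventually-periodic --- or one simply notes the algorithm never satisfies its stopping condition), the algorithm runs forever and yields $\beta=[0;t,t^5,t^9,\ldots]$ and $\gamma=[0;t^3,t^7,t^{11},\ldots]$, as claimed. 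The main obstacle is purely the arithmetic of keeping the exponent inequalities straight; there is no conceptual difficulty beyond a disciplined application of \eqref{eq:common3} and \eqref{eq:common4}.
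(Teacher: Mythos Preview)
Your approach is correct and essentially the same as the paper's: both reduce the claim to verifying, via the Legendre criterion \eqref{eq:common4} and the ultrametric inequality, that $p_n/q_n$ is a convergent of $\alpha-s_{n-1}/t_{n-1}$ and $s_n/t_n$ is a convergent of $\alpha-p_n/q_n$, which comes down to the explicit degree inequalities you identify (the paper writes them as $\deg(q_n)-\deg(t_{n-1})<\deg(t_n)-\deg(q_n)$ and $\deg(t_n)-\deg(q_n)<\deg(q_{n+1})-\deg(t_n)$). The paper organizes this as a direct verification for all $n$ rather than an induction, and your hesitation on the arithmetic is warranted but easily resolved once you plug in $\deg(q_n)=2n^2-n$ and $\deg(t_n)=2n^2+n$ (note your base-case figure $|\beta-p_1/q_1|=2^{-5}$ should be $2^{-7}$, and your claim that ``the left side equals $|\gamma-s_n/t_n|$'' uses the ultrametric \emph{equality}, which needs the one-line check that the two error terms have distinct absolute values).
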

\begin{proof}
For the purpose of this proof, let $b_n=t^{-(4n-3)}$ and $c_n=t^{-(4n-1)}$ for every $n\geq1$, and let $\frac{p_n}{q_n}$ and $\frac{s_n}{t_n}$ be the $n$th convergents of $\beta$ and $\gamma$, respectively, for every $n\geq0$. 
 To verify the proposition, it is sufficient to show that
\begin{equation}
\begin{split}
\label{bn_cn_example}
b_j=&a_j\left(\alpha-\frac{s_{j-1}}{t_{j-1}}\right),\\
c_j=&a_j\left(\alpha-\frac{p_{j}}{q_{j}}\right)
\end{split}
\end{equation}
for all $j\ge 1$.
We verify \eqref{bn_cn_example} by complete induction on $j$. 
Suppose $n\geq1$ and \eqref{bn_cn_example} holds for every $1\leq j<n$. Due to \eqref{eq:common3} and \eqref{eq:common4}, the first equality in \eqref{bn_cn_example} for $j=n$ is equivalent to 
\begin{equation}
\label{ex_step1_prop}
\left|\alpha-\frac{s_{n-1}}{t_{n-1}}-\frac{p_n}{q_n}\right|<\frac{1}{|q_n|^2}\,.
\end{equation}
Using the ultrametric property of the norm in $\fttt$, we see that
\begin{equation}
\label{ex_step1_prop_ultrametric}
\left|\alpha-\frac{s_{n-1}}{t_{n-1}}-\frac{p_n}{q_n}\right|\le
\max\left\{\left|\beta-\frac{p_n}{q_n}\right|,\left|\gamma-\frac{s_{n-1}}{t_{n-1}}\right| \right\}=\max\left\{\frac{1}{|q_n|\,|q_{n+1}|}, \frac{1}{|t_{n-1}|\,|t_n|}\right\}.
\end{equation}
Comparing \eqref{ex_step1_prop} and \eqref{ex_step1_prop_ultrametric}, we see that it is sufficient to verify the inequality
\begin{equation}
\label{2q_n_verify}
\deg(q_n) - \deg(t_{n-1}) < \deg(t_n) - \deg(q_n) \,.
\end{equation}
One can easily see that for every $n\ge 1$ the left side of \eqref{2q_n_verify} is $2n-1$ and the right side is $2n$.

The second equality in \eqref{bn_cn_example} for $j=n$ is 
equivalent to
\begin{equation}
\label{ex_step2_prop}
\left|\alpha-\frac{p_{n}}{q_{n}}-\frac{s_n}{t_n}\right|<\frac{1}{|t_n|^2}\,.
\end{equation}
Applying the ultrametric property once again, we have
\begin{equation}
\label{ex_step2_prop_ultrametric}
\left|\alpha-\frac{p_{n}}{q_{n}}-\frac{s_n}{t_n}\right|\le
\max\left\{\left|\beta-\frac{p_n}{q_n}\right|,\left|\gamma-\frac{s_{n}}{t_{n}}\right| \right\}=\max\left\{\frac{1}{|q_n|\,|q_{n+1}|}, \frac{1}{|t_{n}|\,|t_{n+1}|}\right\}.
\end{equation}
Comparing \eqref{ex_step2_prop} and \eqref{ex_step2_prop_ultrametric}, we see that it is sufficient to verify the inequality
\begin{equation}
\label{2t_n_verify}
\deg(t_n) - \deg(q_n) < \deg(q_{n+1}) - \deg(t_n)\,.   
\end{equation}
One can easily see that for every $n\ge 1$ the left side of  \eqref{2t_n_verify} is $2n$ and the right side is $2n+1$.
\end{proof}

\paragraph{}
\renewcommand{\abstractname}{Acknowledgements}
\begin{abstract}
	EN wishes to thank Felipe Ram{\'\i}rez for a discussion about the Laurent series analogue of \eqref{thm:hall} during the conference ``Diophantine approximation and related fields'' at York 2025.
\end{abstract}

\bibliographystyle{plain}
\bibliography{refs.bib}

\end{document}